\definecolor{darkblue}{rgb}{0.0,0.0,0.3}
\newtheorem{thm}{Theorem}
\newtheorem{prop}[thm]{Proposition}
\newtheorem{lem}[thm]{Lemma}
\newtheorem{coro}[thm]{Corollary}
\newtheorem{conj}[thm]{Conjecture}
\newtheorem{prob}[thm]{Problem}
\newtheorem{defi}[thm]{Definition}
\newtheorem{ex}[thm]{Example}
\newtheorem{rem}[thm]{Remark}
\newcommand{\R}{\mathbb{R}}
\providecommand{\keywords}[1]{\textbf{\textit{keywords:}} #1}
\title {Quotients of uniform positroids}
\author{Carolina Benedetti\\[-5pt]
\small Departamento de Matem\'aticas, Universidad de los Andes, Bogot\'a, Colombia.\\{\tt \small c.benedetti@uniandes.edu.co}
\\[5pt]
Anastasia Chavez\\[-5pt]
\small Department of Mathematics, UC Davis, USA.\\{\tt \small anachavez@math.ucdavis.edu}
\\[5pt]
Daniel Tamayo\\[-5pt]
\small LRI, Univ. Paris-Sud - Univ. Paris-Saclay, Orsay, France.\\{\tt \small daniel.tamayo-jimenez@lri.fr}
}
\date{}
\begin{document}


\maketitle


\begin{abstract}

Flag matroids are a rich family of Coxeter matroids that can be characterized using pairs of matroids that form a quotient. We consider a class of matroids called positroids, introduced by Postnikov, and utilize their combinatorial representations to explore characterizations of flag positroids. 

Given a uniform positroid, we give a purely combinatorial characterization of a family of positroids that form quotients with it. We state this in terms of their associated decorated permutations. In proving our characterization we also fully describe the circuits of this family.

\end{abstract}

\keywords
{Positroids, quotients of matroids, decorated permutations.}

\section{Introduction}

Flag matroids, introduced in~\cite{Cox, GS87, GS87Greed}, are a rich family of Coxeter matroids which can be characterized in several ways. If two matroids $M_1$ and $M_2$ on the same ground set form a flag and their ranks satisfy $r_{M_1}<r_{M_2}$ the we say that $M_1$ is a quotient of $M_2$, or that $M_1$ and $M_2$ are concordant. The main contribution of this paper is to exploit the combinatorics of a special family of matroids called positroids in order to characterize when certain pairs of positroids form a quotient, and thus form a flag positroid.

Introduced in \cite{pos}, positroids have proven to be a combinatorially exciting family of matroids. Positroids have rich connections to total positivity \cite{kodamalauren}, cluster algebras \cite{clusteralgeb}, and physics \cite{agarwala1}. There are several ways to describe positroids combinatorially~\cite{oh} either via Grassmann necklaces, decorated permutations, Le-diagrams, among other combinatorial objects as stated in~\cite{pos}. With such rich combinatorics, one can ask if certain matroidal properties may be better understood in the case of positroids through any of these objects.

It is known that the uniform matroid $U_{k,n}$ is a positroid. We describe combinatorially positroids of rank $k-1$ that are a quotient of $U_{k,n}$. Our characterization is a complete one for $n \leq 6$. The combinatorial description made here is done by providing the decorated permutation associated to the given quotients of $U_{k,n}$.
In this way we then obtain a partial answer to the problem stated in~\cite{oh}, namely, determine combinatorially when two positroids are concordant. Our work also includes a concrete description of the circuits of those positroids concordant to $U_{k,n}$.

On the other hand, our results provide a way of better understanding some flag positroids. This opens the door to determine the realizability of positively oriented flag matroids, in the spirit of~\cite{ARWOriented}, where the authors prove that positively oriented matroids are realizable. 

In order to understand quotients of positroids, we introduce the \emph{poset of positroid quotients}, whose elements are positroids on the same ground set and whose covering relation is given by $N\lessdot M$ if and only if $N$ is a quotient of $M$ and their ranks differ by one.  

We conclude with a conjecture establishing a necessary and sufficient condition for two arbitrary positroids to form a quotient. This conjecture is stated in terms of decorated permutations as well.

The paper is organized as follows: in Section \ref{prelim} we provide the necessary background on positroids and quotients of matroids. In Section \ref{poset:sec} we introduce the poset of quotients of positroids and explore some of its combinatorics. We also characterize families of positroids that are quotients of uniform positroids, and conjecture a general combinatorial rule for positroid quotients. In Section \ref{future} we end with current work and some further questions.

\section{Preliminaries}\label{prelim}

Matroids are combinatorial objects that generalize the notion of linear independence. There are several equivalent ways to define them but we will focus only on the basis definition of a matroid. We suggest \cite{Ox} for a wider view on matroid theory.

Throughout this work we denote by $[n]$ the set $\{1,2,\dots,n\}$ and the $k$-subsets of $[n]$ by ${\mleft(\genfrac..{0pt}{}{[n]}{k}\mright)}$. Whenever there is no room for confusion, we will denote the set $\{a_1,\ldots,a_n\}$ by $a_1\ldots a_n$.

\subsection{Matroids}

\begin{defi}
    A \emph{matroid} M is an ordered pair $(E,\mathcal{B})$ that consists of a finite set $E$ and a collection $\mathcal{B}$ of subsets of $E$ that satisfies the following conditions:
    \begin{enumerate}
        \item[(B1)] $\mathcal{B}\not=\emptyset$,
        
        \item[(B2)] If $B_1,B_2$ are distinct elements in $\mathcal{B}$ and $x\in B_1\setminus B_2$, then there exists an element $y\in B_2\setminus B_1$ such that $(B_1\setminus\{x\})\cup\{y\}\in\mathcal{B}$.
    \end{enumerate} The set $E$ is the {ground set} of $M$ and the collection $\mathcal{B}:=\mathcal{B}(M)$ is called the \emph{set of bases} of $M$.
\end{defi}

It can be shown that every element of $\mathcal{B}$ has the same cardinality, denoted $r_M$, and is called the \emph{rank} of $M$. We will say that a subset $I$ of $E$ is \emph{independent} in the matroid $M=(E,\mathcal B)$ if there exists $B\in\mathcal B$ such that $I\subseteq B$. In particular, notice that $\emptyset$ is always independent. We denote by $\mathcal{I}(M)$ the collection of independent sets of a matroid $M$. If $I$ is not independent we say it is \emph{dependent}. In particular, a minimally dependent subset $C$ of $E$ is called a \emph{circuit} of $M$. That is, $C$ is dependent in $M$ but every proper subset of $C$ is independent. We denote by $\mathcal{C}(M)$ the collection of circuits of the matroid $M$.

A classic example of a matroid (and one of the principal objects in our work) is the uniform matroid.

\begin{defi}
    Let $n$ be a positive integer and $0\leq k\leq n$. Denote by $U_{k,n}$ the ordered pair $\left([n],{\mleft(\genfrac..{0pt}{}{[n]}{k}\mright)}\right)$. 
\end{defi}

\begin{thm}
    $U_{k,n}=\left([n],{\mleft(\genfrac..{0pt}{}{[n]}{k}\mright)}\right)$ is a matroid of rank $k$ and it is called the \emph{uniform matroid} on $[n]$ of rank $k$.
\end{thm}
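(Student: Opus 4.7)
The plan is straightforward: verify the two basis axioms (B1) and (B2) for the pair $\left([n], \binom{[n]}{k}\right)$, then observe that since every member of $\binom{[n]}{k}$ has cardinality $k$, the rank must be $k$.

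For (B1), I would note that since $0 \leq k \leq n$, the collection $\binom{[n]}{k}$ is nonempty (it contains at least one $k$-subset, for instance $\{1, 2, \dots, k\}$, or $\emptyset$ in the degenerate case $k=0$).

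For (B2), let $B_1, B_2$ be distinct elements of $\binom{[n]}{k}$ and let $x \in B_1 \setminus B_2$. Since $|B_1| = |B_2| = k$ and $B_1 \neq B_2$, an easy counting argument gives $|B_1 \setminus B_2| = |B_2 \setminus B_1| \geq 1$, so we may pick any $y \in B_2 \setminus B_1$. The set $(B_1 \setminus \{x\}) \cup \{y\}$ is a subset of $[n]$ of cardinality $k$, since removing $x \in B_1$ decreases the size by one and adjoining $y \notin B_1$ increases it by one. Hence $(B_1 \setminus \{x\}) \cup \{y\} \in \binom{[n]}{k}$, verifying the exchange axiom.

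Finally, every basis has size exactly $k$ by definition, so $r_{U_{k,n}} = k$. There is no real obstacle here; the only point requiring any care is the elementary observation that $|B_1 \setminus B_2| = |B_2 \setminus B_1|$ when $|B_1| = |B_2|$, which guarantees a candidate $y$ exists for the exchange.
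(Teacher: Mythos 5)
Your verification is correct and is exactly the standard argument: (B1) holds because $0\leq k\leq n$ guarantees at least one $k$-subset, and for (B2) the equality $|B_1\setminus B_2|=|B_2\setminus B_1|$ (or simply that $B_2\setminus B_1=\emptyset$ would force $B_1=B_2$) supplies the element $y$, after which the exchanged set clearly has cardinality $k$; the rank statement is immediate since all bases have size $k$. The paper itself states this theorem without proof, treating it as a classical fact from matroid theory, so there is no alternative argument to compare against -- your write-up fills that gap in the expected way.
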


There are several operations on matroids but we will only make use of the following.

\begin{defi}
The \emph{dual} of a matroid $M=(E,\mathcal{B})$ is the ordered pair $M^*=(E,\mathcal{B}^*)$ where $$B^*=\{E\setminus B \,:\, B\in\mathcal{B}\}.$$
\end{defi}

One can easily see that the dual of a matroid is a matroid as well and if $r_M=k$, then $r_{M^*}=n-k$. Thus one obtains that, $U_{k,n}^*=U_{n-k,n}$.


\subsection{Quotients and flag matroids}

In this paper we are concerned with \emph{quotients} of a particular class of matroids that will be defined in Section \ref{sec:positroid}. Thus we now recall some matroid theory on quotients.

\begin{defi}\label{def:quotient}
Given two matroids $M$ and $N$ on the same ground set $E$, we say that $M$ is a \emph{quotient} of $N$ if every circuit of $N$ can be written as the union of circuits of $M$. 
\end{defi}

\begin{ex}
The matroid $U_{1,3}$ is a quotient of $U_{2,3}$. This is clear since the only circuit of $U_{2,3}$ is the set $\{1,2,3\}$. Which can be written as the union of $\{1,2\},\{2,3\},\{1,3\}$, which are the circuits of $U_{1,3}$. In general, $U_{k,n}$ is a quotient of $U_{\ell,n}$ as long as $k\leq \ell$.
\end{ex}

Definition \ref{def:quotient} has been studied in in the context of \emph{strong maps} in Chapter 8 of \cite{white}. In fact, quotients can be defined in many equivalent ways. As such we present the following proposition whose proof we omit and can be found in \cite[Prop. 8.1.6]{white}. 

\begin{prop}\label{prop:strongmaps}
Let $M$ and $N$ be matroids on the same ground set $E$. The following statements are equivalent:
\begin{itemize}
    \item[(a)] $M$ is a quotient of $N$.
    \item[(b)] $N^*$ is a quotient of $M^*$.
    \item[(c)] For any pair of subsets $A$ and $B$ of $E$, such that $A\subset B$, it follows that
    $$r_N(B)-r_N(A)\geq r_M(B)-r_M(A).
    $$
\end{itemize}{}
\end{prop}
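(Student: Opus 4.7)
The plan is to prove (a)$\Leftrightarrow$(c) directly and then derive (a)$\Leftrightarrow$(b) by dualizing. Throughout I will use the closure operator $\mathrm{cl}_M(A) = \{x \in E : r_M(A \cup \{x\}) = r_M(A)\}$ and the basic fact that $x \in \mathrm{cl}_M(A) \setminus A$ iff some circuit of $M$ contains $x$ and is contained in $A \cup \{x\}$.

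For (a)$\Rightarrow$(c), I would first reduce to the single-element case by telescoping: it suffices to show that whenever $B = A \cup \{x\}$ with $x \notin A$, the rank difference $r_N(B) - r_N(A) \in \{0,1\}$ dominates $r_M(B) - r_M(A)$. The only nontrivial case is when the left-hand side is $0$, i.e.\ $x \in \mathrm{cl}_N(A)$. In that case a circuit $C$ of $N$ contains $x$ with $C \setminus \{x\} \subseteq A$. By hypothesis (a) we write $C$ as a union of circuits of $M$; one of them, say $D$, must contain $x$. Since $D \subseteq C \subseteq A \cup \{x\}$, we get $x \in \mathrm{cl}_M(A)$ and hence $r_M(B) - r_M(A) = 0$, as desired.

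For (c)$\Rightarrow$(a), let $C$ be a circuit of $N$; the goal is to show each $x \in C$ lies in some circuit of $M$ contained in $C$, whence $C$ is the union of these circuits. Apply (c) to $A = C \setminus \{x\}$ and $B = C$: since $C$ is a circuit of $N$, both $C$ and $C \setminus \{x\}$ have $N$-rank $|C|-1$, so the right-hand side of (c) is $0$. Hence $r_M(C) = r_M(C \setminus \{x\})$, i.e.\ $x \in \mathrm{cl}_M(C \setminus \{x\})$, producing the desired circuit of $M$ through $x$ inside $C$.

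For (b)$\Leftrightarrow$(c), I would use the identity $r_{M^*}(X) = |X| - r_M(E) + r_M(E \setminus X)$. Applying the already-proved equivalence (a)$\Leftrightarrow$(c) to the pair $(M^*, N^*)$, condition (b) becomes
\[
r_{M^*}(B) - r_{M^*}(A) \;\geq\; r_{N^*}(B) - r_{N^*}(A) \quad \text{for all } A \subseteq B \subseteq E.
\]
Substituting the dual rank formula and setting $A' = E \setminus B$, $B' = E \setminus A$ so that $A' \subseteq B'$, the $|B|-|A|$ terms cancel and the inequality collapses to $r_N(B') - r_N(A') \geq r_M(B') - r_M(A')$, which is (c) for $(M,N)$.

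The conceptually delicate step is the single-element reduction in (a)$\Rightarrow$(c): one must verify that the submodular inequality follows by telescoping the one-element increments, which is routine but relies on the fact that $r_N - r_M$ is order-preserving rather than merely non-negative. The duality step is bookkeeping once the rank formula is correctly applied, so I expect no serious obstacle beyond keeping the direction of the inequality straight under complementation.
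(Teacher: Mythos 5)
Your argument is correct, but there is nothing in the paper to compare it against: the paper deliberately omits the proof of Proposition \ref{prop:strongmaps} and simply cites \cite[Prop.~8.1.6]{white}, where the equivalences are established in the general framework of strong maps (via flats and closure operators). Your route is a self-contained and more elementary derivation tailored to the paper's circuit-based definition of quotient: the single-element reduction plus telescoping for (a)$\Rightarrow$(c), the circuit-through-$x$ extraction for (c)$\Rightarrow$(a) (both resting on the standard fact that $x\in \mathrm{cl}(A)\setminus A$ iff a circuit through $x$ lies in $A\cup\{x\}$), and the dual rank formula $r_{M^*}(X)=|X|-r_M(E)+r_M(E\setminus X)$ for (b)$\Leftrightarrow$(c) are all applied correctly, and the complementation bookkeeping checks out. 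What your approach buys is exactly what this paper would want if it included a proof: everything is phrased in terms of circuits and ranks, the two notions the paper actually uses, rather than importing the strong-map machinery of the reference. Two cosmetic points: in the (c)$\Rightarrow$(a) step you call the $N$-rank difference the ``right-hand side'' of (c), though as stated it is the left-hand side (the logic is unaffected since the inequality then forces $r_M(C)-r_M(C\setminus\{x\})\le 0$, hence $=0$ by monotonicity); and your closing worry about $r_N-r_M$ being order-preserving is moot, since the single-element inequality is literally the statement that $r_N-r_M$ does not decrease when one element is added, so the telescoping sum is immediate.
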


Notice that by part (c) of Proposition
\ref{prop:strongmaps} if we take $A=\emptyset$ it follows that $r_N(B)\geq r_M(B)$ whenever $M$ is a quotient of $N$. Moreover, equality holds in this case for any $B$ if and only if $M=N$. In view of this, the following definition is in order.

\begin{defi}\label{def:flagmatroid}
Let $M_1,\dots,M_k$ be a collection of distinct matroids on the ground set $E$. If for every $1\leq i<j\leq k$ it holds that $M_i$ is a quotient of $M_j$ then we say that the collection $\{M_1,\dots,M_k\}$ is a \emph{flag matroid}. We denote this as $M_1\subset\cdots\subset M_k$ and we refer to the matroids $M_i$ as the \emph{constituents} of the flag matroid. 

If $M_1\subset\cdots\subset M_k$ is a flag matroid, we sometimes refer to its constituents as being \emph{concordant}. That is, a collection of matroids $\{M_1\dots,M_k\}$ is concordant if $M_i$ is a quotient of $M_j$, or vice versa, for all pairs of distinct indexes $i,j$. This property as a whole is called \emph{concordance}.
\end{defi}

If $M$ and $N$ are matroids on the ground set $E$ and if $M$ is a quotient of $N$ then every basis of $M$ is contained in a basis of $N$. Reciprocally, every basis of $N$ contains a basis of ~$M$ \cite{BorovikGelfantWhiteFlags}. Now, if  $\{M_1,\dots,M_k\}$ is a collection of matroids of ranks $r_1<\cdots<r_k$, respectively, then in order for this collection to form a flag matroid, it suffices to check that $M_i$ is a quotient of $M_{i+1}$, for $i=1,\dots,k-1$. Thus the terminology \emph{flag matroid} is natural as the collection of bases in each $\mathcal B(M_i)$ form sequences, or flags, of the form $B_1\subset\cdots\subset B_k$, where $B_i\in\mathcal B(M_i)$.
When $k=n=|E|$ and the sequence of ranks are such that $r_i=i$, for $i\in[n]$, we say that the collection 
$\{M_1,\dots,M_n\}$ is a \emph{full flag matroid}.

\begin{ex}
Let $M_1$ be the matroid on the ground set $[4]$ whose base set is $\mathcal{B}(M_1)=\{\{1\},\{3\},\{4\}\}$. Let $M_2=U_{3,4}$. Then $\{M_1,M_2\}$ is a flag matroid since the circuit $\{1,2,3,4\}$ of $M_2$ can be written as union of the sets $\{2\}, \{1,3\},\{1,4\}$, which are circuits of $M_1$. Also, the bases of $M_1$ and $M_2$ form the following 9 flags \begin{equation*}
    \begin{array}{ccc}
         \{1\}\subset \{1,2,3\} \quad &\quad \{1\}\subset \{1,2,4\}\quad & \quad\{1\}\subset \{1,3,4\}  \\
         \{3\}\subset \{1,2,3\}\quad &\quad \{3\}\subset \{1,3,4\}\quad & \quad\{3\}\subset \{2,3,4\}  \\
         \{4\}\subset \{1,2,4\}\quad &\quad \{4\}\subset \{1,3,4\}\quad &\quad \{4\}\subset \{2,3,4\}.  \\
    \end{array}
\end{equation*}
\end{ex}

\subsection{Positroids}\label{sec:positroid}

Let $\mathbb F$ be a field and let $A$ be a $k\times n$ matrix with entries in $\mathbb F$. Let $I\subset[n]$ such that $|I|=k$. We think of the set $[n]$ as indexing the columns of $A$ and thus the set $I$ is a $k$-subset of the columns.  Let $\Delta_I(A)$ denote the determinant of the $k\times k$ submatrix of $A$ given by the columns in $A$ indexed by $I$.

Let $M=([n],\mathcal B)$ be a matroid of rank $r_M=k$. We say that $M$ is \emph{representable over $\mathbb F$} if there exists a full rank $k\times n$ matrix $A$ with entries in $\mathbb F$ such that $B\in\mathcal B(M)$ if and only if $\Delta_B(A)\neq 0$. In this way, we say that the matrix $A$ \emph{represents} the matroid $M$ over $\mathbb F$. We denote the matrix $A$ by $A_M$ to highlight the representation property. In this context the determinants $\Delta_B(A_M)$ are commonly called the Pl\"ucker coordinates of $M$. On the other hand, given a full rank $k\times n$ matrix $A$ with entries in $\mathbb F$, we construct the matroid $M_A$ by its set of bases $\mathcal B (M_A)$, where $\mathcal B_A=\left\{B\in{\mleft(\genfrac..{0pt}{}{[n]}{k}\mright)}\,:\, \Delta_B(A)\neq 0\right\}$. We say that a matroid $M$ is \emph{representable} if $M$ is representable over $\mathbb R$. Note that the matrix $A_M$ may not be unique since any elementary operation on rows does not affect the matroid structure nor the Pl\"ucker coordinates.

\begin{defi}\label{positroid} A matroid $P=([n],\mathcal{B})$ is called a $\emph{positroid}$ if $P$ is representable via a matrix $A_P$ whose maximal minors are nonnegative.
\end{defi}

Positroids are of particular interest as they have a strong connection to the positive Grassmannian. The \emph{Grassmannian} $Gr_{k,n}(\mathbb R)$ is the set of $k$-dimensional vector subspaces $V$ in $\mathbb R^n$. Such a subspace $V$ can be thought of as a $k\times n$ full dimensional matrix $A$ by taking a basis of $V$ as the rows of $A$. In this way, we can think of $Gr_{k\times n}(\mathbb R)$ as full dimensional $k\times n$ matrices over $\mathbb R$ modulo left multiplication by full dimensional $k\times k$ matrices. Now, given a matroid $M=([n],\mathcal B)$ let $S_M=\{A\in Gr_{k\times n}\,:\, \Delta_I\neq 0 \text{ iff }I\in\mathcal B(M), \text{ for all maximal minors }\Delta_I\}$. Then $$
Gr_{k\times n}(\mathbb R) = \bigsqcup_{M}S_M
$$
where the union is over all matroids $M$ on $[n]$ of rank $k$. This decomposition, however, is not a stratification (\cite{juggling},\cite{GelfandSerganova}). On the other hand, if we restrict ourselves to $Gr_{k\times n}^{\geq 0}$ which is the \emph{nonnegative part of $Gr_{k\times n}(\mathbb R)$}, that is, $$Gr_{k\times n}^{\geq 0}=\{A\in Gr_{k\times n}\,:\Delta_I\geq 0 \text{ for all maximal minors }\Delta_I\}$$ we get a nicer structure. Postnikov \cite{pos}, \cite{pos2} proved that carrying the decomposition of the Grassmannian in matroids to the nonnegative Grassmannian with positroids, provides a stratification for $Gr_{k\times n}^{\geq 0}$. That is,
$$
Gr_{k\times n}^{\geq 0} = \bigsqcup_{P}S^{\geq 0}_P
$$
 where the union is over all positroids $P$ on $[n]$ of rank $k$ and $S_{P}^{\geq 0}$ consists of those $A\in Gr_{k\times n}^{\geq 0}$ such that $\Delta_I(A)>0$ if and only if  $I\in\mathcal B(P)$.

\begin{ex}\label{ex:Positroid}
Let $$A = \left(\begin{matrix} 1 & 1 & 1 & 1 & 1 \\
1 & 2 & 3 & 4 & 5\\
1 & 4 & 9 & 16 & 25
\end{matrix}\right).$$

The reader can check that $M_A$ is a positroid as each of the 10 maximal minors of $A$ is nonnegative. In fact, $M_A$ coincides with the matroid $U_{3,5}$. This is not accident, as the following lemma shows that every uniform matroid is a positroid.

\end{ex}

\begin{lem}\label{lem:uniform are positroids}
    Let $k,n$ be integers such that $k\leq n$. The uniform matroid $U_{k,n}$ is a positroid.
\end{lem}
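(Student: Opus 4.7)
The plan is to exhibit an explicit nonnegative representation of $U_{k,n}$, generalizing Example \ref{ex:Positroid}. Concretely, I would construct a Vandermonde matrix: pick any strictly increasing sequence of positive reals $0 < x_1 < x_2 < \cdots < x_n$ (for instance $x_i = i$, as in the example) and set
$$A = \begin{pmatrix} 1 & 1 & \cdots & 1 \\ x_1 & x_2 & \cdots & x_n \\ x_1^2 & x_2^2 & \cdots & x_n^2 \\ \vdots & \vdots & & \vdots \\ x_1^{k-1} & x_2^{k-1} & \cdots & x_n^{k-1} \end{pmatrix}.$$
The claim to verify is that every maximal minor of $A$ is strictly positive, so that every $k$-subset of $[n]$ is a basis of $M_A$ and $M_A = U_{k,n}$.

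The key step is the classical Vandermonde determinant computation: for any $k$-subset $I = \{i_1 < i_2 < \cdots < i_k\} \subseteq [n]$, the submatrix formed by the columns indexed by $I$ is itself a Vandermonde matrix in the variables $x_{i_1}, \ldots, x_{i_k}$, so
$$\Delta_I(A) = \prod_{1 \leq a < b \leq k}(x_{i_b} - x_{i_a}).$$
Since $x_1 < \cdots < x_n$, every factor is strictly positive, hence $\Delta_I(A) > 0$ for every $I \in \binom{[n]}{k}$.

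From this I would conclude that $A$ has full rank $k$ (any $k$ columns are linearly independent) and that $\mathcal{B}(M_A) = \binom{[n]}{k} = \mathcal{B}(U_{k,n})$. Thus $A$ represents $U_{k,n}$, and since all its maximal minors are nonnegative, Definition~\ref{positroid} is satisfied and $U_{k,n}$ is a positroid. There is no real obstacle here; the only thing to be careful about is the identification of the submatrix on columns $I$ with a genuine Vandermonde matrix in the chosen subset of variables, which is immediate from the form of $A$.
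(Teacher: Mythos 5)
Your proposal is correct and follows essentially the same route as the paper: both exhibit a $k\times n$ Vandermonde matrix built from strictly increasing positive reals, evaluate each maximal minor via the Vandermonde determinant formula to see it is strictly positive, and conclude that all $k$-subsets are bases so the matrix represents $U_{k,n}$ nonnegatively.
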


\begin{proof}
    Take $a_1,\ldots,a_n\in\R$ such that $0<a_1<\ldots<a_n$ and consider the $k\times n$ matrix \begin{equation*}
        A=\left[\begin{array}{cccc}
             1 & 1 & \cdots & 1  \\
             a_1 & a_2 & \cdots & a_n \\
             a_1^2 & a_2^2 & \cdots & a_n^2 \\
             \vdots & \vdots & \ddots & \vdots \\
             a_1^{k-1} & a_2^{k-1} & \cdots & a_n^{k-1} \\
        \end{array}\right].
    \end{equation*} Any maximal minor of $A$ is a Vandermonde matrix so for all $I\in{\mleft(\genfrac..{0pt}{}{[n]}{k}\mright)}$, $$\Delta_I(A)=\prod_{\substack{i_1<i_2 \\ i_1,i_2\in I}}(a_{i_2}-a_{i_1}).$$ As $i_1<i_2$ implies that $a_{i_1}<a_{i_2}$, we have that $a_{i_2}-a_{i_1}>0$ and $\Delta_I(A)$ is a product of positive numbers. Therefore all Pl\"ucker coordinates of $A$ are positive and all collections of at most $k$ columns of $A$ form linearly independent sets. This gives a direct bijection between $M(A)$ and $U_{k,n}$ that preserves independent sets. That is, a matroid isomorphism. Since $A$ has positive maximal minors and is isomorphic to $U_{k,n}$ we conclude that $U_{k,n}$ is a positroid.
    
\end{proof}

Notice that, unlike arbitrary representable matroids, positroids depend heavily on an ordering of the ground set, as changing the order of the columns of a matrix $A$ can change the sign of its minors. 

\subsection{Grassmann necklaces and decorated permutations}

The combinatorial objects presented here appear in \cite{pos} as part of the family of combinatorial objects parametrizing positroids. In order to define these objects we make use of the \emph{$i$-order $<_i$ on $[n]$} which is the total order given by \begin{equation*}
    i <_i i+1 <_i \cdots <_i n <_i 1 <_i 2 <_i \cdots <_i i-1.
\end{equation*} 
Now let $S=\{s_1<_i\cdots<_i s_k\}$ and $T=\{t_1<_i\cdots<_i t_k\}$ be $k$-subsets of $[n]$ totally ordered using $<_i$. We say that  $S\leq_i T$ in the \emph{$i$-Gale order} if and only if $s_j\leq_i t_j$ for all $j\in[k]$. With this in hand, we can define the first combinatorial object we will need and describe how it indexes positroids. 

\begin{defi} Let $0\leq k\leq n$. A \emph{Grassmann necklace} of type $(k,n)$ is a sequence $I=(I_1,\ldots,I_n)$ of subsets $I_i\in  {\mleft(\genfrac..{0pt}{}{[n]}{k}\mright)} $ such that for any $i\in [n]$ \begin{itemize}
    \item if $i\in I_i$, then $I_{i+1}=(I_i\setminus\{i\})\cup\{j\}$ for some $j\in[n]$,
    \item if $i\notin I_i$, then $I_{i+1}=I_i$,
\end{itemize} and $I_{n+1}:=I_1$.
\end{defi}

As stated before, a positroids can be indexed by means of \emph{Grassmann necklaces}. This means that Grassmann necklaces capture enough information through their sequence of sets to completely recover all positroids. Namely, the Grassmann necklace corresponding to $P$ consists of some of the basis of $P$, but these bases are enough to obtain them all, using $i$-Gale orders as Propositions \ref{prop:PostoGN} and \ref{prop:GNtoPost} describe.

\begin{prop}\label{prop:PostoGN}\cite[Lemma 16.3]{pos} Given a matroid $P=([n],\mathcal{B})$ of rank $k$, let $I_i$ be the minimal element of $\mathcal{B}$ with respect to the $i$-Gale order. Then $\mathcal{I}(P)=(I_1,\ldots,I_n)$ is a Grassmann necklace of type $(k,n)$.
\end{prop}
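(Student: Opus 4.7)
My plan is to verify the two defining conditions of a Grassmann necklace directly. I will use two standard tools: (a) the characterization of the $<_i$-Gale minimum basis of $P$ as the basis produced by greedily scanning $[n]$ in $<_i$-order and adding each element whose inclusion preserves independence; and (b) the symmetric basis exchange property, which says that for any two bases $A,B$ and any $a\in A\setminus B$ there exists $b\in B\setminus A$ such that both $(A\setminus\{a\})\cup\{b\}$ and $(B\setminus\{b\})\cup\{a\}$ are bases.

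First I dispatch the case $i\notin I_i$. Since $i$ is the $<_i$-smallest element of $[n]$, the greedy characterization forces $i\in I_i$ whenever $i$ lies in some basis of $P$, so $i$ must be a loop and every basis of $P$ lies in $[n]\setminus\{i\}$. Because $<_i$ and $<_{i+1}$ agree on $[n]\setminus\{i\}$, the induced Gale orders on the bases of $P$ coincide, and therefore $I_{i+1}=I_i$, verifying the second bullet of the definition.

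For the case $i\in I_i$, the goal reduces to showing $I_i\setminus I_{i+1}\subseteq\{i\}$: then $|I_i\triangle I_{i+1}|\le 2$ and $I_{i+1}$ is either equal to $I_i$ (take $j=i$) or equals $(I_i\setminus\{i\})\cup\{j\}$ for the unique $j\in I_{i+1}\setminus I_i$. Arguing by contradiction, I would take $a\in I_i\setminus I_{i+1}$ with $a\neq i$ and apply symmetric basis exchange to $I_i$, $I_{i+1}$, $a$ to obtain $b\in I_{i+1}\setminus I_i$ (necessarily $b\neq i$, since $i\in I_i$) such that both $I':=(I_i\setminus\{a\})\cup\{b\}$ and $J':=(I_{i+1}\setminus\{b\})\cup\{a\}$ are bases of $P$.

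The heart of the argument is a position-by-position analysis of the $<_i$-sorted sequences of $I_i$ and $I'$. If $b<_i a$ were to hold, then writing $I_i$ in $<_i$-sorted form as $x_1<_i\cdots<_i x_k$ with $a=x_p$, the element $b$ would be inserted into $I_i\setminus\{a\}$ at some position $r\le p$, and a pointwise comparison would yield $x_r>_i b$ (or $a>_i b$ at $r=p$), contradicting $I_i\le_i I'$. Hence $a<_i b$. The symmetric analysis on $I_{i+1}$ and $J'$, using $I_{i+1}\le_{i+1}J'$, yields $b<_{i+1}a$. Since $a,b\in[n]\setminus\{i\}$ and the orders $<_i$ and $<_{i+1}$ agree on this set, these two inequalities contradict each other. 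I expect this positional bookkeeping to be the only nontrivial step: it is elementary but must be handled carefully because removing $a$ and inserting $b$ shifts several indices in the sorted sequence, and the direction of the shift is exactly what forces the required order relation.
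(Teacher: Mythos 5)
Your proof is correct, and since the paper offers no argument of its own for Proposition \ref{prop:PostoGN} (it is quoted directly from \cite[Lemma 16.3]{pos}), there is no in-paper proof to compare against; judged on its own, your write-up is a complete, self-contained verification. The loop case is fine (in fact Gale-minimality alone forces $i\in I_i$ whenever $i$ lies in some basis, because $i$ is the $<_i$-minimum of $[n]$ and a basis containing $i$ would beat $I_i$ in the first position, so the appeal to the greedy characterization could even be dropped), and the reduction of the case $i\in I_i$ to showing $I_i\setminus I_{i+1}\subseteq\{i\}$ is exactly what is needed. Your positional bookkeeping is also sound: deleting $a$ from a basis and inserting some $b<_i a$ yields a basis that is strictly $<_i$-Gale smaller at the insertion position, contradicting minimality of $I_i$, and the analogous computation for $I_{i+1}$ and $J'$, combined with the fact that $<_i$ and $<_{i+1}$ agree on $[n]\setminus\{i\}$ while $a,b\neq i$, closes the contradiction. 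The one point you should make explicit is that the argument genuinely requires the \emph{symmetric} basis exchange theorem (Brylawski, Greene, Woodall) rather than axiom (B2): applying (B2) separately in the two directions produces possibly different partners $b$ and $b'$, giving $a<_i b$ and $b'<_{i+1}a$ with no contradiction, so having a single pair $(a,b)$ working for both $I'$ and $J'$ is essential. Since symmetric exchange is a standard theorem for matroids this is not a gap, but it should be invoked as such; with that citation in place, your exchange-based contradiction is a clean alternative to the greedy-style argument in the cited source.
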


\begin{rem}\label{rem:GNandMatroids}
Consider the matrices $${A}=\left(\begin{matrix}1&1&0&-2\\0&1&1&2\end{matrix}\right)\qquad {B}=\left(\begin{matrix}1&0&1&0\\0&1&2&1\end{matrix}\right).$$ The reader can check that the matroids $M_{A}$ and $M_B$ have bases $\mathcal{B}(M_A)=\{12,13,14,23,24,34\}$ and $\mathcal{B}(M_B)=\{12,13,14,23,34\}$ respectively and that their corresponding Grassmann necklaces are $\mathcal{I}(M_A)=(12,23,34,41)$ and $\mathcal{I}(M_B)=(12,23,34,41)$. Although the matroids $M_A$ and $M_B$ have the same Grassmann necklace, $M_A$ is a positroid but $M_B$ is not. That is, although $M_B$ is representable there is no representation of $M_B$ inside $Gr_{k,n}^{\geq  0}$.    
\end{rem}

When $M$ is a positroid then its Grassmann necklace allows us to recover all of its bases in the following way.

\begin{prop}\label{prop:GNtoPost}\cite{oh,pos}  Given a Grassmann necklace $I=(I_1,\ldots,I_n)$ of type $(k,n)$, the set $$\mathcal{B}(I)=\left\{B\in  {\mleft(\genfrac..{0pt}{}{[n]}{k}\mright)}  \,:\, B \geq_i I_i \text{ for all } i\in [n]\right\}$$ is the collection of bases of a positroid $\mathcal{P}(I)=([n],\mathcal{B}(I))$ of rank $k$. 
\end{prop}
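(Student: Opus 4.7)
The plan is to show first that $\mathcal{B}(I)$ satisfies the basis axioms, and then to exhibit an explicit $k\times n$ matrix with nonnegative maximal minors whose nonvanishing Pl\"ucker coordinates are indexed exactly by $\mathcal{B}(I)$. For nonemptiness, I would verify that each $I_j$ itself lies in $\mathcal{B}(I)$, i.e.\ $I_j \geq_i I_i$ in the $i$-Gale order for every pair $i,j$. The one-step claim $I_{i+1} \geq_i I_i$ is immediate: the necklace recursion either fixes $I_i$ or replaces its $<_i$-minimum (the element $i$) by some other element of $[n]$, which cannot be smaller in $<_i$. Iterating around the cycle and keeping careful track of how the orders $<_i$ and $<_j$ differ (they disagree only on the placement of the elements between $i$ and $j$) extends this to the global bound. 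For the basis exchange axiom, given $B_1, B_2 \in \mathcal{B}(I)$ and $x \in B_1 \setminus B_2$, the uniform matroid $U_{k,n}$ already supplies candidates $y \in B_2 \setminus B_1$ with $(B_1 \setminus \{x\})\cup \{y\} \in \binom{[n]}{k}$; the task is to pick such a $y$ so that every $i$-Gale inequality is preserved. I would argue coordinate-by-coordinate: the only $<_i$-slot whose value can drop below the corresponding coordinate of $I_i$ is the one occupied by $x$ in the $<_i$-sorted listing of $B_1$, and the inequality $B_2 \geq_i I_i$ provides a compatible replacement from $B_2 \setminus B_1$. A simultaneous choice of $y$ valid for all $i$ is then extracted by intersecting these candidate sets.

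To see that $\mathcal{P}(I)$ is a positroid, I would pass from $I$ to the associated decorated permutation $\pi_I$, defined by $\pi_I(i)=j$ whenever $I_{i+1}=(I_i\setminus\{i\})\cup\{j\}$ with $j\ne i$, and with appropriate decoration when $i\notin I_i$, and then to the Le-diagram $L(\pi_I)$. The standard parameterization of $L(\pi_I)$ by positive real variables yields a $k\times n$ matrix $A$ all of whose maximal minors are nonnegative. Postnikov's stratification of $Gr_{k\times n}^{\geq 0}$ then ensures that the positroid cell containing $A$ corresponds to $\pi_I$ and has Grassmann necklace $I$; combined with the matroidal description of positroid bases, this gives $\mathcal{B}(M_A)=\mathcal{B}(I)$, so $A$ realizes $\mathcal{P}(I)$ as a positroid in the sense of Definition~\ref{positroid}.

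The hard part is the final matching: showing that the $i$-Gale description of $\mathcal{B}(I)$ coincides with the set of nonvanishing Pl\"ucker coordinates of the Le-diagram parameterization. This is precisely the content of Postnikov's bijection between Grassmann necklaces, decorated permutations, Le-diagrams and positroid cells, so in practice the cleanest route is to quote those results from \cite{pos} as reorganized in \cite{oh} rather than rebuild the combinatorics from scratch.
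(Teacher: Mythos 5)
The paper itself offers no proof of this proposition: it is stated as imported background with the citation \cite{oh,pos}, so the honest last paragraph of your proposal --- quote Postnikov's cell machinery as reorganized by Oh --- is in effect exactly what the paper does, and on its own it would suffice. The trouble is the first half of your proposal, which presents a direct verification of the basis axioms as substantive content. If it is meant to be self-contained, it has a genuine gap; if it is not, it is redundant next to the citation, because Oh's theorem already identifies $\mathcal{B}(I)$ with the bases of the positroid cell attached to $I$ and hence gives both the matroid property and the positivity at once.

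The gap is in the exchange-axiom step. First, the claim that after replacing $x$ by $y$ ``the only $<_i$-slot whose value can drop below the corresponding coordinate of $I_i$ is the one occupied by $x$'' is false in general: if $y <_i x$, every element of $B_1$ lying strictly between $y$ and $x$ in the $<_i$-order slides down one slot in the sorted listing, so a whole block of coordinates can decrease, not just one. Second, and more seriously, even if for each fixed $i$ you correctly describe the set of admissible replacements $y$, you give no argument that these candidate sets have a common element as $i$ ranges over $[n]$; ``extracted by intersecting these candidate sets'' is precisely the point at which the proof must do real work, since different $i$ impose genuinely conflicting constraints. The assertion that a single $y$ always exists is equivalent to the statement that the Gale-order set $\mathcal{B}(I)$ is the basis system of a matroid for an arbitrary Grassmann necklace $I$, which is essentially the theorem of \cite{oh} (proved by matching $\mathcal{B}(I)$ with the bases of Postnikov's cell, equivalently with an intersection of cyclically shifted Schubert matroids), not something obtained by slot-by-slot bookkeeping. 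A smaller but real issue of the same kind: the nonemptiness argument needs $I_j \geq_i I_i$ for all $i,j$, and the one-step inequalities $I_{i+1} \geq_i I_i$ do not chain, because $\geq_{i+1}$ and $\geq_i$ are different orders; this lemma is true and provable, but ``keeping careful track of how the orders differ'' is not yet a proof. So either let the citation to \cite{pos} and \cite{oh} be the proof, as the paper does, or supply an actual argument for the simultaneous exchange step.
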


\begin{rem}
One can check from Propositions \ref{prop:PostoGN} and \ref{prop:GNtoPost} that if $M$ is a positroid and $I(M)$ its Grassmann necklace then $M=\mathcal{P}(\mathcal{I}(M))$ and $I=\mathcal{I}(\mathcal{P}(I))$. Moreover, if $M$ is not a positroid then $M\subsetneq \mathcal P(\mathcal{I}(M))$ and therefore $P(\mathcal{I}(M))$ is the smallest positroid that contains all matroids with the same Grassmann necklace.  This is illustrated with the matroid $M_B$ in Remark \ref{rem:GNandMatroids}.
\end{rem}

Making use of Grassmann necklaces we define \emph{decorated permutations}. These will be used as a second gadget to index positroids. 


\begin{defi} A \emph{decorated permutation} is a bijection $\sigma:[n]\rightarrow[n]$ whose fixed points are decorated as $\sigma(\ell)=\underline{\ell}$ or $\sigma(\ell)=\overline{\ell}$.  We denote the set of all decorated permutations on $[n]$ by $\mathcal D_n$.
\end{defi}

Given $\sigma\in \mathcal D_n$ and $i\in[n]$ we say that $j\in[n]$ is a \emph{weak} $i$-\emph{excedance} of $\sigma$ if $\sigma(j)=\overline{j}$ or $j<_i \sigma(j)$.
We denote by $W_i(\sigma)$ the set of weak $i$-excedances of $\sigma$. It can be shown that $|W_i(\sigma)|=|W_j(\sigma)|$ for all $i,j\in[n]$ \cite{williams}. We thus denote the number of weak excedances of $\sigma$ by $W(\sigma):=|W_1(\sigma)|$. For instance, if $\sigma=5\underline{2}6134$ then $W_1(\sigma)=\{1,3\}$ and thus $W(\sigma)=2$.

Let $\mathcal D_{k,n}$ be the set of all decorated permutations on $[n]$ with $k$ weak excedances. Then $\mathcal D_n=\sqcup_{k=0}^n\mathcal D_{k,n}$. The cardinality of $D_{k,n}$ has been computed in \cite[Proposition 23.1]{pos}  and corresponds to the sequence \href{https://oeis.org/A046802}{A046802} \cite{OEIS}. The following proposition gives us a bijective way to go between Grassmann necklaces and decorated permutations. If $\sigma$ is a decorated permutation we will denote by $P_\sigma$ its corresponding positroid.

\begin{prop}\label{prop:dpTOgn}
\cite[Proposition 4.6]{ARW}\label{arw:prop} Let $I=(I_1,\ldots,I_n)$ be a $(k,n)$ Grassmann necklace. Let $\pi(I)$ be the decorated permutation in $\mathcal D_{k,n}$ constructed as follows: \begin{itemize}
    \item if $I_{i+1}=(I_i\setminus\{i\})\cup\{j\}$ with $i\neq j$, then $\pi(I)(j):=i$.
    
    \item if $I_{i+1}=I_i$ where $i\in I_i$, then $\pi(I)(i)=\overline{i}$.
    
    \item if $I_{i+1}=I_i$ where $i\notin I_i$, then $\pi(I)(i)=\underline{i}$.
\end{itemize} Conversely, let $\sigma$ be a decorated permutation on $[n]$ with $k$ weak excedances, and let $I_{i}$ be the set of weak $i$-excedances of $\sigma$. Then $\mathcal{I}(\sigma)=(I_{1},\ldots,I_{n})$ is a Grassmann necklace of type $(k,n)$.
In fact, these constructions are inverses of each other \cite{ARW}. Namely, $\pi(\mathcal{I}(\sigma))=\sigma$, and $\mathcal{I}(\pi(I))=I$.
\end{prop}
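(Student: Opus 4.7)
The plan is to treat the two directions separately and then verify they are mutually inverse. The central observation that drives every case analysis is that in a Grassmann necklace $I$, element $j$ can only leave the sequence at step $j$: the axioms say that passing from $I_i$ to $I_{i+1}$ can only remove the element $i$ itself. Traversing the cycle $I_1 \to I_2 \to \cdots \to I_n \to I_1$ once, each element $j \in [n]$ is therefore removed at most once and, by cardinality conservation, added at most once. This partitions $[n]$ into three mutually exclusive cases for each $j$: (a) $j$ is never in the necklace, (b) $j$ is always in the necklace, or (c) $j$ enters at a unique step $i \neq j$ and leaves at step $j$. These match Rules 3, 2, and 1 in the definition of $\pi(I)$, respectively.

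First I would prove that $\pi(I)$ is a well-defined decorated permutation. Well-definedness on each $j\in [n]$ is immediate from the trichotomy above. For bijectivity I would run the analogous trichotomy on each target value $i$: either $i\notin I_i$ (Rule 3 yields $\underline{i}$ in the image), $i\in I_i$ with $I_{i+1}=I_i$ (Rule 2 yields $\overline{i}$), or $i\in I_i$ with $I_{i+1}\ne I_i$ (Rule 1 puts the integer value $i$ in the image, from a unique $j$). That $\pi(I) \in \mathcal{D}_{k,n}$ will drop out once I identify $I_1$ with the weak $1$-excedance set of $\pi(I)$ in the last step.

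Next I would verify the reverse passage: starting from $\sigma\in\mathcal{D}_{k,n}$ and letting $I_i$ be the set of weak $i$-excedances, show that $\mathcal{I}(\sigma) = (I_1, \ldots, I_n)$ satisfies the Grassmann necklace axioms. The key observation is that the orders $<_i$ and $<_{i+1}$ differ only in the position of $i$ (the minimum of $<_i$ and the maximum of $<_{i+1}$). Hence passing from $I_i$ to $I_{i+1}$ can change membership only for the two elements $i$ and $\sigma^{-1}(i)$. A case split on whether $\sigma(i)$ equals $\overline{i}$, $\underline{i}$, or an integer $\neq i$ shows: in the first two cases $I_{i+1} = I_i$, with $i\in I_i$ respectively $i\notin I_i$; in the third case $I_{i+1} = (I_i \setminus \{i\}) \cup \{\sigma^{-1}(i)\}$ with $i\in I_i$. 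This simultaneously establishes the necklace axioms and the identity $\pi(\mathcal{I}(\sigma)) = \sigma$.

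To close the bijection I would verify $\mathcal{I}(\pi(I)) = I$. By the trichotomy from the first step, an element $j$ in case (c) enters at step $\ell = \pi(I)(j)$, so $j \in I_i$ iff $i$ lies in the cyclic interval $(\ell, j]$. The main obstacle is translating this geometric cyclic-interval condition into the algebraic condition $j <_i \ell$ that defines a weak $i$-excedance; the cleanest way is to note that both are equivalent to saying that starting from $i$ and moving forward cyclically one encounters $j$ before $\ell$. Cases (a) and (b) are immediate, since $j$ lies in either no or all of the sets $I_i$, matching $\pi(I)(j)$ being $\underline{j}$ (never an excedance) or $\overline{j}$ (always an excedance).
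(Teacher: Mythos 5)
The paper does not actually prove this proposition---it is quoted from \cite[Proposition 4.6]{ARW}---so there is no in-paper argument to compare against; judged on its own, your proof is correct and is essentially the standard one: the trichotomy of how each element behaves around the necklace cycle (never present, always present, or entering exactly once and leaving only at its own step) gives well-definedness and bijectivity of $\pi(I)$, and the observation that $<_i$ and $<_{i+1}$ differ only in the placement of $i$ (so only $i$ and $\sigma^{-1}(i)$ can change status) yields the necklace axioms for $\mathcal{I}(\sigma)$ together with both inverse identities. The one phrase to tighten is the appeal to ``cardinality conservation'' for ``added at most once'': the clean justification is that $I_{n+1}=I_1$ forces each element's additions and removals over the full cycle to balance, so since removal is possible only at step $j$ (hence at most once), the addition also happens at most once.
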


\begin{rem}
The bijection between Grassmann necklaces and decorated permutations outlined here, and used throughout the paper, is due to Postnikov in~\cite{pos}. Our results can also be expressed using a different bijection given in~\cite{oh}.
\end{rem}

\begin{ex} 
Consider the Grassmann necklace $I=(13,34,34,45,56,61)$. Using Proposition \ref{prop:dpTOgn} we get the decorated permutation $\pi(I)=5\underline{2}6134$. 
On the other hand, taking the Grassmann necklace $(12,23,34,41)$ that indexes the uniform positroid $M=U_{2,4}$ we get that its corresponding decorated permutation is $\pi_M=3412$.
\end{ex}

As part of our notation, we denote by $[a,b]$ the cyclic intervals of $[n]$. That is, the sets of form $\{a,a+1,\dots,b-1,b\}$ if $a\leq b$ and $\{a,a+1,\ldots,n,1,\ldots,b\}$ if $b<a$. This allows us to describe some details in a more compact way. 

The positroid $U_{k,n}$ is such that its Grassmann necklace is given by 
$$\mathcal{I}=([k], [2,k+1],\dots,[n,k-1])$$ and its decorated permutation is $$\pi_{k,n} := (n-k+1)(n-k+2)\cdots n12\cdots(n-k).$$ That is, $\pi(i)=n-k+i \,(mod \,n)$ for $i\in[n]$.

It follows from Proposition \ref{prop:dpTOgn} that $\sigma(i)=\underline i$ implies $\{i\}$ is a \emph{loop} (or 1-element circuit) of the associated positroid $P_\sigma$, and thus is never contained in an element of $\mathcal{I}(P_\sigma)$. Similarly, $\sigma(i)=\overline i$ implies $\{i\}$ is a \emph{coloop} of $P_\sigma$ and is in every element of $\mathcal{I}(P_\sigma)$ and thus in every basis of $P_\sigma$.

As stated in the introduction, we are interested in a combinatorial characterization of quotients of positroids. Thus our main question is, given two positroids $P_1$ and $P_2$ on the ground set $[n]$, can we determine combinatorially whether $P_1$ and $P_2$ form a quotient? We will make use of decorated permutations to give a partial answer to this question.


\section{Poset of positroid quotients}\label{poset:sec}
For every $n\geq 1$, we let $\mathcal P_n$ be the poset whose elements consist of the decorated permutations in $D_n$ and whose order relation is the transitive closure of the following  covering relation: $\tau\lessdot\pi$ if and only if $\tau\in D_{k-1,n}$, $\pi\in D_{k,n}$ for some $k\in [n]$, and $P_\tau$ is a quotient of $P_\pi$. We call $\mathcal P_n$ \emph{the poset of positroid quotients on $[n]$}. See Figure \ref{fig:Poset_of_Concordance} for an illustration of $\mathcal P_3$.

\begin{figure}[h]
\centering

\tikzset{every picture/.style={line width=0.75pt}} 

\begin{tikzpicture}[x=0.75pt,y=0.75pt,yscale=-1,xscale=1]

\draw  [line width=0.2mm]   (305.67,70) -- (155.67,110) ;

\draw  [line width=0.2mm ]   (305.67,70) -- (255,110.67) ;

\draw  [line width=0.2mm ]  (305.67,70) -- (305.67,109.33) ;

\draw  [line width=0.2mm ]  (354.91,110.93) -- (305.67,70) ;

\draw  [line width=0.2mm ]  (305.67,70) -- (405.67,110.67) ;

\draw  [line width=0.2mm ]  (305.67,70) -- (455,110) ;

\draw  [line width=0.2mm ]  (305.67,70) -- (205.67,110) ;

\draw  [line width=0.2mm ]  (305.61,232) -- (456.33,190.67) ;

\draw  [line width=0.2mm ]  (305.61,232) -- (356.33,190.67) ;

\draw  [line width=0.2mm ]  (305.61,232) -- (305.67,189.33) ;

\draw  [line width=0.2mm ]  (305.61,232) -- (255.67,190.67) ;

\draw  [line width=0.2mm ]  (305.61,232) -- (206.12,190.11) ;

\draw  [line width=0.2mm ]  (305.61,232) -- (156.78,190.18) ;

\draw  [line width=0.2mm ]  (305.61,232) -- (406.33,189.33) ;

\draw  [line width=0.2mm ]  (305.67,130.33) -- (455,171) ;

\draw  [line width=0.2mm ]  (305.67,130.33) -- (355.67,170.33) ;

\draw  [line width=0.2mm ]  (305.67,130.33) -- (154.75,170.25) ;

\draw  [line width=0.2mm ]  (255.25,130.25) -- (405,170.33) ;

\draw  [line width=0.2mm ]  (255.25,130.25) -- (255.75,169.75) ;

\draw  [line width=0.2mm ]  (255.25,130.25) -- (154.75,170.25) ;

\draw  [line width=0.2mm ]  (204.75,129.75) -- (255.75,169.75) ;

\draw  [line width=0.2mm ]  (204.75,129.75) -- (205.25,170.25) ;

\draw  [line width=0.2mm ]  (204.75,129.75) -- (154.75,170.25) ;

\draw  [line width=0.2mm ]  (154.25,130.75) -- (305.75,169.75) ;

\draw  [line width=0.2mm ]  (154.25,130.75) -- (205.25,170.25) ;

\draw  [line width=0.2mm ]  (154.25,130.75) -- (154.75,170.25) ;

\draw  [line width=0.2mm ]  (405,170.33) -- (406.33,129.67) ;

\draw  [line width=0.2mm ]  (406.33,129.67) -- (355.67,170.33) ;

\draw  [line width=0.2mm ]  (406.33,129.67) -- (305.75,169.75) ;

\draw  [line width=0.2mm ]  (204.75,129.75) -- (355.67,170.33) ;

\draw  [line width=0.2mm ]  (356.33,130.33) -- (455,171) ;

\draw  [line width=0.2mm ]  (356.33,130.33) -- (305.75,169.75) ;

\draw  [line width=0.2mm ]  (356.33,130.33) -- (255.75,169.75) ;

\draw  [line width=0.2mm ]  (456,130.33) -- (205.25,170.25) ;

\draw  [line width=0.2mm ]  (456,130.33) -- (405,170.33) ;

\draw  [line width=0.2mm ]  (456,130.33) -- (455,171) ;

\draw (305,245) node  [align=left] {$\underline{123}$};
\draw (155.67,180) node  [align=left] {$312$};
\draw (204.67,180) node  [align=left] {$21\underline{3}$};
\draw (255,180) node  [align=left] {$3\underline{2}1$};
\draw (306,180) node  [align=left] {$\underline{12}\overline{3}$};
\draw (355.67,180) node  [align=left] {$\underline{1}32$};
\draw (405.67,180) node  [align=left] {$\underline{1}\overline{2}\underline{3}$};
\draw (456,180) node  [align=left] {$\overline{1}\underline{23}$};
\draw (154.67,120) node  [align=left] {$21\overline{3}$};
\draw (205,120) node  [align=left] {$231$};
\draw (255.33,120) node  [align=left] {$3\overline{2}1$};
\draw (306,120) node  [align=left] {$\overline{1}32$};
\draw (356,120) node  [align=left] {$\overline{1}\underline{2}\overline{3}$};
\draw (405.67,120) node  [align=left] {$\underline{1}\overline{23}$};
\draw (456.33,120) node  [align=left] {$\overline{12}\underline{3}$};
\draw (305.33,60) node  [align=left] {$\overline{123}$};

\end{tikzpicture}
\caption{The poset of positroid quotients $\mathcal P_{3}$.}
\label{fig:Poset_of_Concordance}
\end{figure}
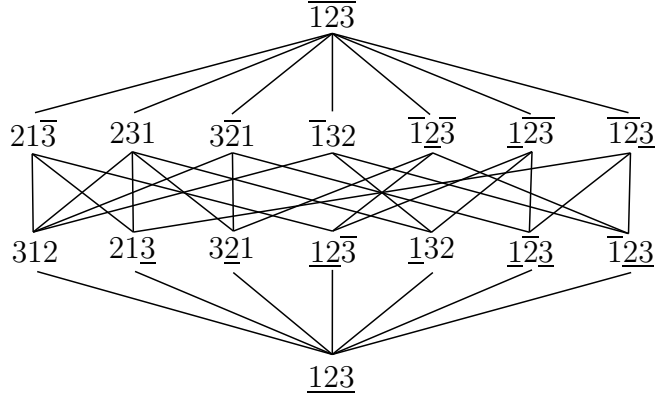

Let $\pi_{k,n}$ denote the decorated permutation corresponding to the uniform positroid $U_{k,n}$. That is, $\pi_{k,n}:=(n-k+1)\cdots (n-1)\:n\:1\:2\:\cdots (n-k)$. We state the following properties of the poset $\mathcal{P}_n$, whose proof we leave to the reader:
	
	\begin{enumerate}
		\item It is a poset with $\hat{0}$ given by the decorated permutation $\pi_{0,n}$ and $\hat{1}$ given by the decorated permutation $\pi_{n,n}$.
		
		\item It is graded and the rank of each decorated permutation is its number of weak excedances. The number of elements in each rank is recorded in the sequence \href{https://oeis.org/A046802}{A046802}. Thus, its rank polynomial is symmetric and unimodal.
		
		\item If $\sigma,\tau\in D_n$ are decorated permutations such that $\tau\leq\sigma$ and $\sigma(i)=\underline{i}$, then $\tau(i)=\underline{i}$. On the other hand, if $\tau(i)=\overline{i}$, then $\sigma(i)=\overline{i}$.
	\end{enumerate}

Similarly, one can construct the poset of matroid quotients $\mathcal M_n$  whose elements are all matroids on the ground set $[n]$ and whose order relation is $M<N$ if and only if $M\not=N$ and $M$ is a quotient of $N$. This is implicitly done in \cite{white}. Moreover, it is shown in \cite[Prop. 8.2.5]{white}, in the language of strong maps, that if $\{M,N\}$ form a flag matroid and $r_M<r_N$ then there exist matroids $M=M_0,M_1,\dots,M_k=N$ such that $r_{M_{i+1}}=r_{M_i}+1$ for $i=0,\dots,k-1$, and $\{M_0,\dots,M_k\}$ form a flag matroid. That is, any flag matroid $M\subset N$ can be extended to a saturated flag matroid $M=M_0\subset M_1\subset \cdots \subset M_k=N$ whose constituents $M_i$ have each possible rank between $r_M$ and $r_N$.

In other words, if $M< N$ in $\mathcal M_n$ then there is a saturated chain $M\lessdot M_1\lessdot\cdots\lessdot N$ in $\mathcal M_n$. The existence of such saturated chains is made explicit via the Higgs lift  (see \cite[Prop. 8.2.5]{white} for details). One may feel tempted to conclude the same in the poset $\mathcal{P}_n$. However, this is unclear as one needs to guarantee that the Higgs lift of a positroid, is again a positroid. Nonetheless, since $\mathcal P_n$ is a ranked poset, then if $\sigma,\tau\in\mathcal D_n$ are such that $\sigma\leq\tau$ then there is a saturated chain in $\mathcal D_n$ from $\sigma$ to $\tau$, although the existence of such saturated chains is not explicit. Thus we pose the following problem.

\begin{prob}
Let $\sigma\leq \tau$ in $\mathcal D_n$. How can we construct positroids $P_\sigma=P_{\sigma_1},\dots,P_{\sigma_{m-1}}=P_tau$ such that  $\sigma=\sigma_0\lessdot\sigma_1\lessdot\cdots\lessdot\sigma_m=\tau$ is a saturated chain in $\mathcal D_n$ where $m=r_{\tau}-r_{\sigma}$?.
\end{prob}

Our main theorem identifies a set of positroids that are quotients of $U_{k,n}$ for any $k\in[n]$. We do this by defining the following sequence of moves on decorated permutations. 

\begin{defi}
Given a decorated permutation $\pi\in D_n$ and a subset $A$ of $[n]$ we denote by $\overleftarrow{\rho_A}(\pi)$ the element of $D_n$ obtained from $\pi$ by performing the following moves in order:
\begin{itemize}
    \item[(F)]\textbf{Freeze move:} Freeze the value $i$ in $\pi$ if and only if $i\in A$.
    \item[(S)]\textbf{Shift move:} After freezing, cyclically shift the remaining values in $\pi$ one place to the left, jumping over frozen elements when necessary.
    \item[(D)]\textbf{Decoration move:} If any fixed point $i$ appears after the shift move (S), decorate it as $\underline i$.
\end{itemize}
Analogously, we denote by $\overrightarrow{\rho_A}(\pi)$ the permutation obtained from $\pi$ by performing the (F), (S), (D) moves, with the difference that (S) is shifts to the right instead of the left, and the (D) decorates any new fixed point i as $\overline{i}$. We call this sequence of moves a \emph{cyclic shift} of $\pi$.
\end{defi}

\begin{ex}
Let $\pi=34512$. Then
\begin{align*}
    \overleftarrow{\rho_{\emptyset}}(\pi)=45123\quad\quad & \quad\quad \overleftarrow{\rho_{12}}(\pi)=45\underline{3}12\\
    \overleftarrow{\rho_{25}}(\pi)=41532\quad\quad & \quad\quad \overleftarrow{\rho_{4}}(\pi)=54123\\
   \overrightarrow{ \rho}_{12}(\pi)=53412\quad\quad & \quad\quad  \overrightarrow{ \rho_{4}}(\pi)=24\overline{3}51\\
\end{align*}
\end{ex}

\begin{prop}
\label{prop:inverse and dual}
    Let $A\subseteq [n]$. Then $\overrightarrow{\rho_A}(\pi_{k,n})^{-1}=\overleftarrow{\rho_B}(\pi_{n-k,n})$, where $B$ is the set $\pi_{k,n}^{-1}(A)$.
    

\end{prop}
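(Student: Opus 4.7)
The plan is to unpack both sides by tracking positions and values under the freeze-shift operations, and to exploit the cyclic-shift structure of $\pi_{k,n}$ to see that the two descriptions yield the same decorated permutation.

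First, note that $\pi_{k,n}^{-1} = \pi_{n-k,n}$, which follows directly from $\pi_{k,n}(i) \equiv i + (n-k) \pmod{n}$. Set $\pi = \pi_{k,n}$, and write the unfrozen positions of $\pi$ as $[n]\setminus B = \{p_1 < \cdots < p_r\}$ with values $v_i = \pi(p_i)$, and the unfrozen positions of $\pi^{-1}$ as $[n]\setminus A = \{a_1 < \cdots < a_r\}$ (note $|A|=|B|=n-r$, so both counts agree). Unwinding the definitions, one gets that $\overrightarrow{\rho_A}(\pi)(p_i) = v_{i-1}$ and $\overrightarrow{\rho_A}(\pi)(b) = \pi(b)$ for $b \in B$ (indices modulo $r$); inverting yields $(\overrightarrow{\rho_A}(\pi))^{-1}(v_j) = p_{j+1}$ and $(\overrightarrow{\rho_A}(\pi))^{-1}(a) = \pi^{-1}(a)$ for $a \in A$. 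Similarly, on the other side $\overleftarrow{\rho_B}(\pi^{-1})(a_j) = \pi^{-1}(a_{j+1})$ and $\overleftarrow{\rho_B}(\pi^{-1})(a) = \pi^{-1}(a)$ for $a \in A$. Thus both sides immediately agree on $A$.

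The crux is comparing the two sides on $[n]\setminus A$. Since $\pi$ is a cyclic shift by $n-k$, the values $v_i = p_i + (n-k)$ for $p_i \leq k$ lie in $[n-k+1,n]$ and the values $v_i = p_i - k$ for $p_i > k$ lie in $[1,n-k]$; hence $(v_1,\dots,v_r)$ is the concatenation of two increasing runs and is therefore a cyclic rotation of the sorted sequence $(a_1,\dots,a_r)$. Setting $t = |[1,n-k]\setminus A|$, one obtains the key identification $v_i = a_{i+t}$ (indices modulo $r$). Substituting, $(\overrightarrow{\rho_A}(\pi))^{-1}(a_\ell) = p_{\ell-t+1}$ and $\overleftarrow{\rho_B}(\pi^{-1})(a_\ell) = \pi^{-1}(a_{\ell+1}) = p_{(\ell+1)-t}$, which coincide. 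This establishes the equality of the underlying permutations.

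For the decorations: when $0 < k < n$, the permutation $\pi_{k,n}$ has no fixed points, so fixed points of each side arise solely from the (S) step and therefore coincide (the underlying permutations being equal). The (D) move marks them as $\overline{\cdot}$ on the left and $\underline{\cdot}$ on the right, consistent with the decorated inverse convention that interchanges $\overline{\cdot}$ with $\underline{\cdot}$ (reflecting that loops of a positroid are coloops of its dual, cf.\ Proposition \ref{prop:strongmaps}). The boundary cases $k\in\{0,n\}$ are immediate, as $\pi_{k,n}$ is then the identity with all positions decorated uniformly. The main obstacle throughout is the bookkeeping needed to pin down the cyclic rotation identifying $v_i$ with $a_{i+t}$; once this is set up, the rest is a direct calculation.
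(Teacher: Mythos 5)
Your proof is correct and follows essentially the same route as the paper's: note $\pi_{k,n}^{-1}=\pi_{n-k,n}$, observe that freezing the values in $A$ for $\pi_{k,n}$ corresponds to freezing $B=\pi_{k,n}^{-1}(A)$ for the inverse, and use that inversion exchanges values and positions, turning the right shift of values into the left shift defining $\overleftarrow{\rho_B}$. Your explicit bookkeeping (the identification $v_i=a_{i+t}$ via the two increasing runs of $\pi_{k,n}$, and the remark on how decorations behave under inversion) simply spells out steps that the paper's terser argument leaves implicit.
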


\begin{proof}

Denoting $\pi_{k,n}$ as $\pi$, notice that $\pi^{-1} = \pi_{n-k,n}$. Thus freezing $\pi(i)$ for every $\pi(i)\in A\subset [n]$ implies every element of $B = \{b \in [n]: b = \pi^{-1}(i) \text { for } i\in A\}$ is frozen in $\pi_{n-k,n}$.


Consider the remaining values that are not frozen. They form a permutation, $\omega$, over the set $[n]\setminus A$. The permutation $\omega$ is indexed by the ordered set $[n]\setminus B$. Then a cyclic shift to the right of the values of $\omega$ is equivalent to a cyclic shift to the left of the indices of $\omega$. But for $i\not\in B$, the index $\omega^{-1}(i)$ is precisely the entry $\pi^{-1}(i)=\pi_{n-k,n}(i)$. Thus, $\overrightarrow{\rho_{A}}(\pi)^{-1} = \overleftarrow{\rho_B}(\pi^{-1})$.
\end{proof}

    
    


\subsection{Uniform quotients}

Now we will characterize most positroids of rank $k-1$ that are quotients of the uniform positroid $U_{k,n}$ through its decorated permutation $\pi_{k.n}$. For the remainder of the paper we will consider $A\subseteq[n]$ as a union of disjoint cyclic intervals of $[n]$ so that $A = [a_1,i_1]\cup \cdots \cup [a_m,i_m]$, denoting singletons as $\{a_j\}$. 
We call each of the cyclic intervals a \emph{cyclic component} of $A$. For example if $n=13$ and $A=\{1,2,5,8,9,12,13\},$ then $A=[12,2]\cup \{5\}\cup [8,9]$.



\begin{thm}\label{thm:rank}
Let $k\leq n$ and $A\subset[n]$ be a union of disjoint cyclic intervals of $[n]$ such that no interval in $A$ has size greater than $k-1$. Then the positroid represented by $\overleftarrow{\rho_A}( \pi_{k,n})$ has rank $k-1$.
\end{thm}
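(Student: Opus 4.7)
The plan is to use the standard fact that the rank of a positroid $P_\sigma$ equals the number of weak excedances $W(\sigma)=|W_1(\sigma)|$ of its decorated permutation, and then to compute $W(\tau)$ directly for $\tau:=\overleftarrow{\rho_A}(\pi_{k,n})$. Writing $\pi:=\pi_{k,n}$, so that $\pi$ acts on $[n]$ as cyclic rotation by $n-k$ and its decorated weak excedances are exactly the positions $[1,k]$ (uniformly in the cases $k=n$, $0<k<n$), I would set $B:=\pi^{-1}(A)$ and observe that $B$ is again a disjoint union of cyclic intervals of the same sizes as $A$, each of size $\leq k-1$. List the non-frozen positions in increasing order $q_1<q_2<\cdots<q_r$ with $r=n-|A|$, and set $w_s:=\pi(q_s)$. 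By definition of the moves $(F),(S),(D)$, the permutation $\tau$ satisfies $\tau(p)=\pi(p)$ (with original decoration) for $p\in B$, $\tau(q_s)=w_{s+1\bmod r}$ off $B$, and any new fixed point produced by the shift is decorated $\underline{\,\cdot\,}$, contributing $0$ to $W$.

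Splitting $W(\tau)$ into frozen and non-frozen contributions, the frozen part equals $|B\cap[1,k]|$, since a frozen position $p$ is a weak excedance of $\tau$ if and only if it was one of $\pi$. The heart of the proof is the pointwise identity
\[
[q_s<w_{s+1\bmod r}]\;=\;[q_{s+1\bmod r}\in[1,k]]\qquad (s\in\{1,\dots,r-1\}),
\]
together with the wrap-around assertion $[q_r<w_1]=0$ while $[q_1\in[1,k]]=1$. To prove the identity for $s<r$: if $q_{s+1}\leq k$, then $w_{s+1}=q_{s+1}+(n-k)\geq q_{s+1}>q_s$, so both indicators are $1$; if $q_{s+1}>k$, then $w_{s+1}=q_{s+1}-k$, and the inequality $q_s<q_{s+1}-k$ would force $q_{s+1}-q_s>k$, producing a frozen sub-interval of $B$ of size $\geq k$ strictly between $q_s$ and $q_{s+1}$, contradicting the interval-size hypothesis on $B$.

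For the wrap-around, the cyclic gap of $B$ that passes through $n$ has size $n-q_r+q_1-1\leq k-1$, so $q_r\geq q_1+(n-k)=w_1$. Moreover $q_1\in[1,k]$, because otherwise the $k$ consecutive positions $[1,k]$ would lie entirely in $B$ and sit inside a cyclic interval of $B$ of size $\geq k$, again contradicting the hypothesis. Combining these, the non-frozen contribution sums to $|\{s\in\{2,\dots,r\}:q_s\in[1,k]\}|=(k-|B\cap[1,k]|)-1$, since exactly $k-|B\cap[1,k]|$ of the positions in $[1,k]$ are non-frozen and $q_1$ always lies among them. Adding the frozen contribution yields $W(\tau)=k-1$. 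The main obstacle is the pointwise identity above with its wrap-around exception: this is precisely where the interval-size bound $\leq k-1$ is needed, ultimately through the observation that consecutive non-frozen positions (cyclically) differ by at most $k$; everything else is bookkeeping.
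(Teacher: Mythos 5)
Your proof is correct and follows essentially the same route as the paper: both arguments reduce the theorem to counting weak excedances of $\overleftarrow{\rho_A}(\pi_{k,n})$, using that frozen positions keep their excedance status, that a non-frozen position receives the value of the next non-frozen position, and that the interval-size bound forces consecutive non-frozen positions to be cyclically at most $k$ apart (together with $q_1\le k$). The only difference is organizational: the paper explicitly identifies the unique lost excedance $j=\max\{i\in[k]\,:\,\pi(i)\notin A\}$ and shows $W_1(\sigma)=[k]\setminus\{j\}$, whereas you total the frozen and non-frozen contributions via your indicator identity; the underlying computation is the same.
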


\begin{proof} 

Let $\pi=\pi_{k,n}$, $\sigma:=\overleftarrow{\rho_A}(\pi)$, and $P_\sigma$ be the positroid represented by $\sigma$. Since the number of weak excedances of a decorated permutation is equal to the rank of its corresponding positroid, it is enough to show that $|W_1(\sigma)|=k-1$. Recall that $W_1(\pi)=[k]$ and $\pi(x)=n-k+x$ for $x\in[1,k]$ and $\pi(x)=x-k$ for $x\in[k+1,n]$. We will show that $W_1(\sigma)=W_1(\pi)\setminus \{j\} $, where $j=\max\{i\in[k]\,:\, \pi(i)\notin A\}$. 

Let $l\in[k]$. If $\pi(l)\in A$, then $\sigma(l)=\pi(l)$ and $l\in W_1(\sigma)$. On the other hand, if $\pi(l)\notin A$, and $l\neq j$, then there exists $l'=min\{i\in[k]\,:\,l< i \text{ and } \pi(i)\notin A\}$. Since $\pi(l)=n-k+l$ and $\pi(l')=n-k+l'$ we have that $\sigma(l)=n-k+l'$. As $l'\in [k]$, we have that $l<l'<n-k+l'$ and therefore $l\in W_1(\sigma)$. This holds in particular for $l'=j$. Hence $W_1(\pi)\setminus \{j\}\subseteq W_1(\sigma)$.

Let $j'=\min\{i\in[n]\setminus [k]\,:\, \pi(i)\notin A\}$ if it exists. Since $\pi(j)=n-k+j$ and $\pi(j')=j'-k$, we have that $\sigma(j)=j'-k$. Recall intervals of $A$ have length less than $k$. Then $j'-j\leq k$, implying that $j'-k\leq j$. Since $\sigma(i)=\underline{i}$ for any fixed $i$, it follows $j\not\in W_1(\sigma)$. 


So far we have shown that $W_1(\pi)\setminus \{j\} \subseteq W_1(\sigma)$ and $j\not\in W_1(\sigma)$. We complete the proof by showing $[k+1,n]\not\subset W_1(\sigma)$.

Let $l\in[k+1,n]$ and $\sigma(l)=l'$ for some $l'$. If $l<_1 l'$ then $\pi(l')=l'-k$ and $\sigma(l)=l'-k$. Since $l'-l\leq k$, then $l'-k\leq l$ and thus $l\notin W_1(\sigma)$. 

On the other hand, if $l'\leq k$ then $\sigma(l)=n-k+l'$. Again, the cyclic interval $[l',l]$ contains at most $k-1$ elements. Thus $n+l'-l\leq k$ and $n-k+l'\leq l$ and we conclude that $l\notin W_1(\sigma)$. Therefore $W_1(\sigma)=W_1(\pi)\setminus j$ and $P_\sigma$ has rank $k-1$, as desired.

\end{proof}

\begin{coro}\label{prop:rank}
Let $k\leq n$ and let $A\subset[n]$ such that $|A|\leq k-1$. Then the positroid represented by the decorated permutation $\overleftarrow{\rho_A}(\pi_{k,n})$ has rank $k-1$.
\end{coro}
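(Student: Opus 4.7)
The plan is to deduce the corollary as an immediate consequence of Theorem~\ref{thm:rank}. The theorem gives the rank statement under two hypotheses on $A$: first, that $A$ is written as a union of disjoint cyclic intervals of $[n]$, and second, that each such cyclic interval has size at most $k-1$. So the task reduces to verifying that both hypotheses are automatic whenever $|A|\leq k-1$.

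For the first hypothesis, I would note that \emph{every} subset of $[n]$ admits a (unique) decomposition into maximal disjoint cyclic intervals, simply by grouping together consecutive elements of $A$ in the cyclic order on $[n]$; this is the decomposition defined just before Theorem~\ref{thm:rank}. Thus no restriction is placed on $A$ by writing $A = [a_1,i_1]\cup\cdots\cup [a_m,i_m]$.

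For the second hypothesis, each cyclic component $[a_j,i_j]$ is a subset of $A$, so its cardinality is bounded by $|A|\leq k-1$. Hence no cyclic interval in the decomposition of $A$ has size greater than $k-1$, and Theorem~\ref{thm:rank} applies directly, yielding that the positroid represented by $\overleftarrow{\rho_A}(\pi_{k,n})$ has rank $k-1$.

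There is no real obstacle here: the corollary is a weakening of the hypothesis in the theorem from a per-component size bound to an overall size bound on $A$, and the implication is immediate. If anything, the only point worth making explicit to the reader is that the hypothesis ``$A$ is a union of disjoint cyclic intervals'' in the theorem is not a genuine restriction, but merely a notational convention for partitioning $A$ into its maximal cyclic components.
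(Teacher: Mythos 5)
Your deduction is correct and is exactly how the paper intends the corollary to follow: since $|A|\leq k-1$, every maximal cyclic component of $A$ has size at most $k-1$, so Theorem~\ref{thm:rank} applies verbatim. The paper leaves this step implicit (stating it as a corollary without proof), and your observation that the interval decomposition is merely a notational convention, not a restriction, is the right justification.
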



We are now ready to state our main result.

\begin{thm}[Main theorem]\label{main:theo}
 Let $\sigma\in D_{k-1,n}$ be a decorated permutation of rank $k-1$ and choose $A\in {\mleft(\genfrac..{0pt}{}{[n]}{\ell}\mright)}$, for some $\ell\in\{0,\dots,k-1\}$. If $\sigma=\overleftarrow{\rho_A}(\pi_{k,n})$, then $\sigma\lessdot\pi_{k,n}$. 
\end{thm}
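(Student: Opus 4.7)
The plan is to reduce the covering relation $\sigma \lessdot \pi_{k,n}$ to showing that $P_\sigma$ is a quotient of $U_{k,n}$. The rank condition --- that $r_{P_\sigma}$ and $r_{U_{k,n}}$ differ by exactly one --- is built into the hypothesis $\sigma \in D_{k-1,n}$ (and is consistent with Corollary \ref{prop:rank}, since $|A| = \ell \leq k-1$). Once the quotient relation is established, $\sigma \lessdot \pi_{k,n}$ follows directly from the definition of the covering relation in $\mathcal{P}_n$.

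To prove $P_\sigma$ is a quotient of $U_{k,n}$ I would invoke the circuit criterion of Definition \ref{def:quotient}. The circuits of $U_{k,n}$ are exactly the $(k+1)$-subsets of $[n]$, so the task reduces to showing that every $(k+1)$-subset $S \subseteq [n]$ can be written as a union of circuits of $P_\sigma$.

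The key ingredient, which I would establish as a preliminary lemma, is an explicit combinatorial description of $\mathcal{C}(P_\sigma)$ in terms of $A$. Starting from $\sigma = \overleftarrow{\rho_A}(\pi_{k,n})$ one computes the Grassmann necklace $\mathcal{I}(\sigma)$ using Proposition \ref{prop:dpTOgn} ($I_i$ is the set of weak $i$-excedances of $\sigma$), then reads off the bases via Proposition \ref{prop:GNtoPost}, and finally identifies the circuits. I expect two kinds of circuits to appear: first, the loops, which are precisely the fixed points $\sigma(i) = \underline{i}$ introduced by the decoration move (D) and which correspond to cyclic components of $A$ of maximal length $k-1$; second, non-loop circuits, which I anticipate to be $k$-subsets of $[n]$ ``straddling'' a cyclic component of $A$ in a way dictated by the shift move (S).

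With this circuit description at hand, the verification that every $(k+1)$-subset $S$ is a union of circuits of $P_\sigma$ proceeds by case analysis on the intersection of $S$ with $A$ and with the loops of $P_\sigma$. If $S$ contains a loop, we peel it off and reduce to the case without loops; otherwise, for each $x \in S$ we exhibit a $k$-subset circuit of $P_\sigma$ that is contained in $S$ and passes through $x$. The main obstacle in this program is the preliminary circuit description itself: once it is in place, the required covering of $(k+1)$-subsets reduces to a relatively direct cyclic-combinatorics argument, but pinning down the circuits requires tracking in detail how the (F), (S), (D) moves transform the bases of $U_{k,n}$ into bases and circuits of $P_\sigma$.
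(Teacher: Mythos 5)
Your overall strategy coincides with the paper's: describe the circuits of $P_\sigma$ for $\sigma=\overleftarrow{\rho_A}(\pi_{k,n})$ explicitly, then show every $(k+1)$-subset of $[n]$ (circuit of $U_{k,n}$) is a union of circuits of $P_\sigma$, and conclude $\sigma\lessdot\pi_{k,n}$ from the rank count. However, the key lemma you defer to --- the circuit description --- is both unproved and guessed incorrectly, and this is where essentially all of the work lies (it is the paper's Theorem \ref{ShiftedUniformCircuits}, whose proof runs through the Grassmann necklace $I_r=W_r(\sigma)=[r,r+k-1]\setminus\{j_r\}$ and explicit basis constructions). The true circuits are: for each cyclic component $[a_j,i_j]$ of $A$ of length $l_j$, the cyclic interval $[i_j+1,i_j+k-l_j]$ of size $k-l_j$, together with all $k$-subsets of $[n]$ that contain none of these intervals. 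Your anticipated picture --- loops (only from components of length $k-1$) plus certain $k$-subsets ``straddling'' a component --- misses all the intermediate circuits of size $k-l_j$ with $1\le l_j\le k-2$, which are neither loops nor $k$-sets, and mischaracterizes which $k$-subsets are circuits (it is the $k$-subsets \emph{avoiding} the intervals, not ones tied to $A$ in a positive way).

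Because of this, the covering step as you sketch it (``peel off loops; otherwise exhibit a $k$-subset circuit of $P_\sigma$ inside $S$ through each $x$'') genuinely fails. Take $n=5$, $k=3$, $A=\{1,3\}$, so $\sigma=\overleftarrow{\rho_{13}}(\pi_{3,5})=35214$: the small circuits are $[2,3]$ and $[4,5]$, there are no loops, and the $4$-set $S=\{2,3,4,5\}$ contains no $3$-subset circuit at all, since every $3$-subset of $S$ contains $[2,3]$ or $[4,5]$; the only way to cover $S$ is $S=[2,3]\cup[4,5]$. The paper's Proposition \ref{lemma:union_circuits} treats exactly this situation in a separate case ($\mathcal C_O=\emptyset$, and also $|\mathcal C_O|=1$), where the decisive point is a counting argument using $\sum_j|J_j|=|A|\le k-1$ to show that the intervals contained in $O$ have total size at least $k+1$ and hence cover $O$. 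Your proposal never invokes the hypothesis $\ell\le k-1$ in the covering step (only implicitly for the rank), so even granting a corrected circuit description, the case analysis you outline does not close the argument.
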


The proof of Theorem \ref{main:theo} relies on showing that every circuit of the uniform positroid represented by the decorated permutation $\pi_{k,n}$ is a union of circuits of the positroid represented by the decorated permutation $\overleftarrow{\rho_A}(\pi_{k,n})$ for the set $A\subset [n]$. To this end, we state the following propositions.

We first show that the set of circuits of the positroid $P_{\sigma}$ represented by $\sigma = \overleftarrow{\rho_A}(\pi_{k,n})$ has a simple description. Then we show how to obtain the circuits of $\pi_{k,n}$ as the union of circuits of $P_{\sigma}.$ 

\begin{thm}[Circuit description of shifted uniform positroid]\label{ShiftedUniformCircuits}
Let $A=[a_1,i_1]\cup\cdots\cup[a_m,i_m]$ be a subset of $[n]$ composed of disjoint cyclic intervals of lengths $l_1,\ldots,l_m$, respectively. Then the circuits of the positroid $P_{\sigma}$ represented by $\sigma = \overleftarrow{\rho_A}(\pi_{k,n})$ are given by the set 
\begin{equation}\label{circuits_sigma}
    \mathcal C_A =\left\{ [i_{j}\!+\!1,i_{j}\!+\!k\!-\!l_j]\,:\,j=1,\dots ,m\right\}
    \cup
    \left\{C\in {\mleft(\genfrac..{0pt}{}{[n]}{k}\mright)} \,:\,[i_{j}+1,i_{j}+k-l_j]\not\subset C\right\}.
\end{equation}
Moreover, the circuits of size less than $k$ can be read from the decorated permutation $\sigma$. They are precisely the intervals $[i_{j}+1,\sigma(i_{j}+1)].$
\end{thm}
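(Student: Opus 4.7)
The plan is to combine an explicit Grassmann-necklace computation for $P_\sigma$ with direct basis-extension arguments. First I would give a formula for $\sigma(p)$: if $p\in\pi_{k,n}^{-1}(A)$ (which consists of the cyclic intervals $[a_j+k,i_j+k]$ taken mod $n$) then $\sigma(p)=p-k\pmod n$, and otherwise $\sigma(p)=p'-k\pmod n$, where $p'$ is the next position after $p$ in the complement of $\pi_{k,n}^{-1}(A)$ read cyclically. Using this formula I would compute $I_r(\sigma)$ for every $r\in[n]$, generalizing the excedance analysis carried out in the proof of Theorem~\ref{thm:rank}.

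Next I would show each $C_j:=[i_j+1,\,i_j+k-l_j]$ is a circuit of $P_\sigma$. For dependence I would prove $r_{P_\sigma}(C_j)=|C_j|-1$ via the cyclic rank formula $r_{P_\sigma}([a,b])=|[a,b]\cap I_a(\sigma)|$, which follows from Proposition~\ref{prop:GNtoPost} applied to $[a,b]=C_j$: an explicit count of $C_j\cap I_{i_j+1}(\sigma)$ yields $k-l_j-1$ elements. For minimality I would show every proper subset $S\subsetneq C_j$ is contained in some basis of $P_\sigma$, by padding $S$ with the elements of $A$ (forced into every basis above the $i$-Gale minimum) together with carefully chosen elements of $[n]\setminus(C_j\cup A)$, so that the resulting $(k-1)$-set dominates every $I_i(\sigma)$ in the $i$-Gale order. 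The short-circuit description in terms of $\sigma(i_j+1)$ then follows by directly tracing the effect of the shift move at position $i_j+1$ (which becomes a loop $\underline{i_j+1}$ exactly when $l_j=k-1$).

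For the circuits of size exactly $k$: since $P_\sigma$ has rank $k-1$ by Theorem~\ref{thm:rank}, every $k$-subset of $[n]$ is automatically dependent, and is a circuit iff each of its $(k-1)$-subsets is independent, iff it contains none of the $C_j$. Ruling out other small circuits amounts to showing that any dependent set not equal to some $C_j$ either lies inside or contains one; this again follows from the cyclic rank formula together with the explicit description of $\sigma$. The main obstacle I foresee is the case analysis required when the cyclic components of $A$ sit close together, since then the frozen-position intervals $\pi_{k,n}^{-1}([a_j,i_j])$ can interleave with the circuits $C_{j'}$ in nontrivial ways, forcing delicate book-keeping both in the Grassmann-necklace computation and in the basis-extension argument.
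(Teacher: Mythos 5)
Your skeleton (compute $I_r(\sigma)=[r,r+k-1]\setminus\{j_r\}$ from the weak excedances, identify the short circuits $C_j=[i_j+1,i_j+k-l_j]$, then argue that the $k$-sets avoiding them are the remaining circuits) is the same as the paper's, and your dependence argument via the greedy prefix formula $r_{P_\sigma}([a,b])=|[a,b]\cap I_a(\sigma)|$ is a legitimate variant of the paper's observation that an independent $[\sigma(j),j]$ would have to lie in $I_{\sigma(j)}$. However, your minimality step rests on a false claim: the elements of $A$ are \emph{not} forced into every basis, nor even into every necklace element. For $n=5$, $k=3$, $A=\{1\}$ one gets $\sigma=45213$, and $\{4,5\}$ is a basis avoiding $1$, while $I_2=\{2,4\}$ already omits $1$. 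Worse, padding a proper subset of $C_j$ with elements of $A$ can itself manufacture a circuit, because elements of $A$ may sit inside the circuit attached to a \emph{different} component: for $n=6$, $k=3$, $A=\{1\}\cup\{3\}$ (so $\sigma=562143$), the subset $\{2\}\subsetneq C_1=[2,3]$ padded with $3\in A$ is exactly the circuit $[2,3]$. So the padding scheme as described would fail; the paper instead completes inside the cyclic window, taking $B_x=I_1\cup\{j\}\setminus\{x\}$ and verifying the Gale dominations $B_x\geq_r I_r$ for every $r$ by hand, and some such explicit verification is unavoidable here.

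The second gap is the completeness direction. Rank $k-1$ does make every $k$-set dependent, but the two facts you still owe --- every $(k-1)$-subset of a $k$-set containing no $C_j$ is a basis, and every set of size less than $k$ containing no $C_j$ is independent --- do not follow from the cyclic rank formula, which only evaluates ranks of cyclic intervals, whereas the sets in question are typically not intervals. This is precisely where the paper's proof does its real work: a Gale-order comparison showing that $F\not\geq_r I_r$ forces $[r,j_r]\subseteq F$ and hence some $C_j\subseteq D$, and then a multi-case extension argument (your ``delicate book-keeping'': how the frozen components $J_i$ and the intervals $L_i$ can interleave, including the degenerate $U_{n,n}$ situations) showing that any small set avoiding all the $C_j$ extends to a $k$-set avoiding them. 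You correctly identify this interleaving as the main obstacle, but you do not supply a mechanism for handling it, so as written the proposal does not establish that $\mathcal C_A$ contains \emph{all} circuits (nor that every $k$-set avoiding the $C_j$ really is one). The loop observation at the end (position $i_j+1$ is a loop exactly when $l_j=k-1$) is fine.
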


\begin{proof}
Let $\pi=\pi_{k,n}$. Assume that $\sigma=\overleftarrow{\rho_A}(\pi_{k,n})$ for some $A\in {\mleft(\genfrac..{0pt}{}{[n]}{\ell}\mright)} $ and $\ell=0,\dots,k-1$. 
We begin by proving that each interval $[i_j+1,i_j+k-l_j]$ satisfies $\sigma(i_j+k-l_j)=i_j+1$ and that for all $r\in[n]$, the interval $[\sigma(j_r),j_r]$ is a circuit of $P_{\sigma}$. Let us suppose that among the frozen set $A$, one of its intervals is $[a,b]$ of length $b-a+1$. Notice that the interval $[b+1,a+k-1]$ is the cyclic interval that would extend (clockwise) $[a,b]$ into a bigger interval of length precisely $k$. In fact $[b+1,a+k-1]$ is the description of $[i_j+1,i_j+k-l_j]$ in terms of $a$ and $b$. As we are freezing the interval $[a,b]$ and $\pi(x)=x-k$ (mod $n$), we have that we are freezing the positions $[a+k,b+k]$ (mod $n$). Therefore we have that if
\begin{equation*}
    \pi=\left(\begin{array}{ccccccc}
        \cdots & a+k-1 & a+k & \dots & b+k & b+k+1 & \cdots \\
        \cdots & a-1 & a & \dots & b & b+1 & \cdots
    \end{array}\right),
\end{equation*} and \begin{equation*}
    \sigma=\left(\begin{array}{ccccccc}
        \cdots & a+k-1 & a+k & \dots & b+k & b+k+1 & \cdots \\
        \cdots & b+1 & a & \dots & b & * & \cdots
    \end{array}\right).
\end{equation*} and $\sigma(a+k-1)=b+1$ and $\sigma(i_j+k-l_j)=i_j+1$. For the rest of the proof we will denote such intervals as $[\sigma(j_r),j_r]$ for some $r\in[n]$.


We will now show the circuits described above and the k-subsets not containing these intervals are the only circuits of $P_{\sigma}$.

If $A=\emptyset$ then $\mathcal{C} = {\mleft(\genfrac..{0pt}{}{[n]}{k}\mright)}$, the circuits of the uniform positroid $U_{k-1,n}$ represented by $\sigma$.

Now let $\ell\in[k-1]$ and $A\in{\mleft(\genfrac..{0pt}{}{[n]}{\ell}\mright)}$. As before, we have that $W_1(\pi)=[k]$. Moreover, for each $r\in[n]$ it holds that $W_r(\sigma)=\{r,\ldots,j_r-1,j_r+1,\ldots,r\!+\!k\!-\!1\}=[r,r+k-1]\setminus\{j_r\}$ where $j_r=\max\{i\in[r,r\!+\!k\!-\!1]\,:\, \pi(i)\notin A\}$. That is, $j_r$ is the largest among the first $k$ positions in $r$-Gale order such that $\pi(j_r)$ is not frozen. Therefore, the Grassmann necklace $\mathcal{I}_\sigma=(I_1,\dots,I_n)$  of $\sigma$ is such that $I_i=W_i(\sigma)$. 
This means, in particular, that $I_i$ is the minimal basis of $P_\sigma$ with the $i$-Gale order, for each $i$. 
Let us see that for each $r\in[n]$ the interval $[\sigma(j_r),j_r]$ is a circuit of $P_{\sigma}$. We illustrate the proof with $r=1$ as it is done analogously for each $r$. Set $j=j_1$ and notice that since $j\notin W_1(\sigma)$ then $\sigma(j)<j$ and $[\sigma(j),j]\subseteq [1,k]$. If $[\sigma(j),j]$ were independent, then $[\sigma(j),j]$ would be contained in ${I}_{\sigma(j)}$ but $I_{\sigma(j)}$ avoids $j$ since $j_{\sigma(j)}=j$. Therefore $[\sigma(j),j]$  is dependent in $P_\sigma$ and now we show it is a circuit. To this end we will see that each of the sets $J_x=[\sigma(j),j]\setminus \{x\}$, for each $\sigma(j)\leq x\leq j$, is independent  by constructing a basis $B_x$ of $P_{\sigma}$ such that $J_x\subseteq B_x$. If $x=j$ then take $B_x=I_1$. If $x\neq j$ let us prove that $B_x=I_1\cup \{j\}\setminus \{x\}$ is a basis of $P_{\sigma}$.

Since
$B_x=[1,x-1]\cup[x+1,k]$ then $B_x\geq_r I_1$ for $r=1,\dots,x$ as $I_1=[1,j-1]\cup[j+1,k]$. Therefore, $B_x\geq_r I_r$ for $r=1,\dots,x$. It remains to show that $B_x\geq_r I_r$ for $r=x+1,\dots,n$. Recall that $I_r=[r,r+k-1]\setminus j_r$ for $r\in [n]$, where $j_r=\max\{i\in[r,r+k-1]\,:\,\pi(i)\notin A\}$. Arranging the elements of $I_r$ and $B_x$ using the $r$-Gale order we get that $I_r=\{r,\dots,j_r-1,j_r+1,\dots,r+k-1\}$ for any $r$, whereas for $x\leq r\leq k$ we get that $B_x=\{r,\dots,k,1,\dots,x-1,x+1,\dots,r-1\}$ and for $k+1\leq r\leq n$ we get $B_x=\{1,\dots,x-1,x+1,\dots,k\}$. In either case one can see that $B_x\geq_r I_r$. This allows us to conclude that $[\sigma(j_r),j_r]$ is a minimally dependent set in $P_{\sigma}$, for every $r\in[n]$, and therefore a circuit. The reader can verify that each of the sets 
$[i_{j}\!+\!1,i_{j}\!+\!k\!-\!l_j]$ from equation (\ref{circuits_sigma}) are of the form  $[\sigma(j_r),j_r]$ for some $r$. 

Now we show that any $k$-subset of $[n]$ that does not contain a $[\sigma(j_r),j_r]$ for some $r$, is a circuit.
Let $D$ be a $k$-subset of $[n]$ such that $D$ does not contain any of the sets $[\sigma(j_r),j_r]$ as given above. Given a $k-1$ subset $F$ of $D$ let us see that $F$ is a basis of $\sigma$. If $F$ were not a basis then there is $r\in[n]$ such that $F\not\geq_r I_r.$ Then, if  $F=\{c_1,\dots,c_{k-1}\}$ and $I_r=\{b_1<_r\cdots<_r b_{k-1}\}$ in $r$-Gale order, the assumption that $F\not\geq_r I_r$ implies that for some $l$, $c_p\geq b_p$ for $p\in[l-1]$ and $c_l<b_l$. However, $I_r=[r,j_r-1]\cup[j_r+1,r+k-1]$ so $l$ must be the position of $j_r$, which implies that $c_l=j_r$. Thus $[r,j_r]\subset F$ and then $[\sigma(j_r),j_r]\subset D$, which is a contradiction. 

So far we have shown that $\mathcal C_A$ is contained in $\mathcal C_{\sigma}$, the set of circuits of the positroid $P_{\sigma}.$ To prove the reverse containment we will see that if $S\notin \mathcal C_A$ then $S\notin\mathcal C_{\sigma}$. First, notice that we only need to consider $S\subset [n]$ such that $|S|<k$ and $S$ does not contain any interval $[\sigma(j_r),j_r]$. We will prove that such $S$ can be extended to a set $D$ of cardinality $k$ such that $D$ does not contain any of the $[\sigma(j_r),j_r]$ and therefore, $S$ will be independent. Suppose that the decomposition of the set $A$ in cyclic intervals is $A=J_1\cup\cdots \cup J_s$. Then, each of the $J_i$ gives rise to a cyclic interval $L_i$ such that $J_i\cup L_i$ is a cyclic interval of size $k$. Moreover, each of the $L_i$'s is a circuit of $\sigma$.

In order to prove that $S$ can be extended to the wanted $D$ we will make use of $J_1\cup L_1$ to this end. Let $D'=J_1\cup L_1\cup S$.

\emph{Case 1:} Suppose $D'$ contains no $L_j$ except $L_1$. If $|S\setminus(J_1\cup L_1)|\geq 1$ then $S$ can be extended to a $k$-subset $D$ of $D'\setminus\{a\}$ where $a\in L_i\setminus S$ does not contain any of the intervals in $\mathcal C_A$. On the other hand, if $S\subset L_1\cup J_1$ then take $D=J_1\cup L_1\setminus \{a\}\cup\{b\}$ where $a\in L_1\setminus S$ and $b:=c+1$ where $c$ is the clockwise greatest element of $L_1$. Notice that such $a$ exists as $S$ does not contain $L_1$ by hypothesis. If such a $b$ does not exist, it means that $D'=[n]$ and thus $\sigma\lessdot\pi=U_{n,n}$ as any matroid is concordant to $U_{n,n}$. Thus if $b$ does not exists there is nothing to prove. Otherwise, such $b$ exists and $D$ has size $k$, contains $S$ and none of the intervals in $\mathcal C_A$. Therefore $S$ is independent.

\emph{Case 2:} Suppose $D'$ contains $L_1$ and another different $L_j$. Without loss of generality call it $L_2$. If this is the case, then either $L_1\cup L_2$ or, $J_1\cup L_2$ is a cyclic interval. In the former case, then $|D'|\geq k+1$ and $D$ can be obtained by removing sequentially (until we get size $k$) from $D'$ elements $a\in (L_1\cup L_2) \setminus S$. These elements exist as $L_2\not\subset S$ and $L_1\cap L_2\neq\emptyset.$ In the latter case, $D$ can be obtained as a subset of $D''=(J_1\setminus\{d\})\cup (L_1\setminus \{a\})\cup\{b\}\cup S$ where $d\in (J_1\cap L_2)\setminus S$, $a\in (L_1\setminus S)$ and $d:=c+1$ where $c$ is the clockwise greatest element of $L_1$. As $|D|>k$ and does not contain any $L_j$, but does contain $S$, we can extend $S$ to a basis and make it an independent set.

\emph{Case 2.1:} If simultaneously, $D'$ contains $L_2$ and $L_3$ such that $L_1\cup L_2$ and $J_1\cup L_3$ are a cyclic intervals, then either $J_3\cap L_1=\emptyset$ or $J_3\cap L_1\neq\emptyset$. In the former case, remove form $D'$ any pair of elements $a\in (L_3\cap J_1)\setminus S$ and $b\in (L_2\cap L_1)\setminus S$ in which case $D'\setminus\{a,b\}$ will have still at least $k$ elements as the cyclic components of $S$ that intersect $L_2$ and $L_3$ have elements outside of $D'$. 
On the other hand, if $J_3\cap L_1\neq\emptyset$ then in order to maintain cardinality at least $k$ of $D'\setminus\{a,b\}$ with $a,b$ as above, we would need to guarantee that such elements can be substituted to keep the cardinality $k$ requirement. This can be achieved if $[n]\setminus J_1\cup L_1$ has at least 2 elements. If $|[n]\setminus J_1\cup L_1|=0$ we fall again in the $U_{n,n}$ case. If instead $|[n]\setminus J_1\cup L_1|=1$ then $J_3\cap J_1\neq\emptyset$ (as the reader can check) which is a contradiction as the $J_i's$ are disjoint. This exhausts all the possibilities and the proof is complete. 

\end{proof}



\begin{prop}\label{lemma:union_circuits}
Every circuit of $\pi_{k,n}$ can be obtained as a union of elements in $\mathcal C_A$.
\end{prop}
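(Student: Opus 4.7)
The plan is to take an arbitrary circuit $C$ of $U_{k,n}$, i.e., a $(k+1)$-subset of $[n]$, and exhibit circuits of $P_\sigma$ contained in $C$ whose union is $C$. Theorem~\ref{ShiftedUniformCircuits} gives two types of circuits to work with: the ``small'' circuits $L_j = [i_j+1,\,i_j+k-l_j]$ (of size $k-l_j$), and the $k$-subsets of $[n]$ containing no $L_j$. Write $\mathcal{L}_C := \{L_j : L_j \subseteq C\}$, $U := \bigcup_{L \in \mathcal{L}_C} L$, and $V_C := \bigcap_{L \in \mathcal{L}_C} L$.

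First I would dispatch the two easy cases. If $\mathcal{L}_C = \emptyset$, then no $L_j$ sits inside $C$, so each of the $k+1$ different $k$-subsets of $C$ is a circuit of $P_\sigma$ and their union equals $C$. If $V_C \neq \emptyset$, pick any $y \in V_C$: the $k$-subset $C \setminus \{y\}$ contains no $L_j$ (any $L_j \in \mathcal{L}_C$ contains $y$ and so is not a subset of $C\setminus\{y\}$; any $L_j \notin \mathcal{L}_C$ is not even contained in $C$), so it is a circuit of $P_\sigma$, and the family $\mathcal{L}_C \cup \{C \setminus \{y\}\}$ covers $C$ since $y \in V_C \subseteq U$.

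The substantive case is $\mathcal{L}_C \neq \emptyset$ with $V_C = \emptyset$, where I plan to prove the stronger conclusion $U = C$ (so $\mathcal{L}_C$ alone covers $C$). Suppose for contradiction $p \in C \setminus U$. Since no $L \in \mathcal{L}_C$ contains $p$, cutting the cyclic order of $[n]$ at $p$ realizes each $L \in \mathcal{L}_C$ as an honest subinterval of the linear order on $[n] \setminus \{p\}$, preserving all pairwise intersections. On a line, any finite family of pairwise intersecting intervals has a common point (1D Helly); therefore $V_C = \emptyset$ forces some pair $L_{j_1}, L_{j_2} \in \mathcal{L}_C$ to be disjoint. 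Being disjoint subsets of $C$, $(k-l_{j_1}) + (k-l_{j_2}) \leq |C| = k+1$ gives $l_{j_1}+l_{j_2} \geq k-1$, while the hypothesis $|A| \leq k-1$ of Theorem~\ref{main:theo} gives $l_{j_1}+l_{j_2} \leq k-1$. Equality forces $|L_{j_1}|+|L_{j_2}| = k+1 = |C|$, hence $L_{j_1} \cup L_{j_2} = C$, contradicting $p \in C \setminus U$.

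I expect the last case to be the main obstacle: the cyclic combinatorics of small circuits does not \emph{a priori} force them to cover $C$, and the Helly-style reduction to a linear interval is the conceptual step that lets the cardinality bound $|A|\leq k-1$ do its work.
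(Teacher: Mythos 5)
Your proof is correct, and in its hardest case it takes a genuinely different route from the paper's. Both arguments run on the circuit description of Theorem~\ref{ShiftedUniformCircuits}, covering a $(k+1)$-set by a mix of interval circuits $L_j$ and $k$-subset circuits, and both ultimately hinge on the same count: two disjoint interval circuits $L_{j_1},L_{j_2}$ inside a $(k+1)$-set give $l_{j_1}+l_{j_2}\ge k-1$, which against $|A|\le k-1$ forces $|L_{j_1}|+|L_{j_2}|=k+1$, so the pair partitions the circuit. The difference is the case split and the key lemma: the paper splits on how many $k$-subset circuits lie inside the given circuit $O$ (at least two, exactly one, none), and in the last case it must treat overlapping interval circuits by a separate, rather delicate argument before running the disjoint-case counting; you instead split on the family of interval circuits inside $C$ (empty; nonempty with a common point, where deleting a common point produces a $k$-subset circuit; empty intersection), and in the last case you cut the cyclic order at a hypothetical uncovered point $p$ and invoke one-dimensional Helly to extract a disjoint pair, which the counting then forces to cover $C$, contradicting the choice of $p$. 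Your Helly step cleanly replaces the least transparent passage of the paper's proof (the overlapping-intervals analysis when no $k$-subset circuit fits inside $O$), while the paper's version records in each case a slightly more explicit covering family (two $k$-subsets, a $k$-subset plus one interval, or intervals alone). Two small points worth making explicit in a final write-up: disjoint, hence distinct, interval circuits come from distinct frozen components of $A$, which is what licenses $l_{j_1}+l_{j_2}\le|A|\le k-1$; and the hypothesis $|A|\le k-1$ you import is indeed the standing assumption under which the proposition is invoked in Theorem~\ref{main:theo}, so your use of it matches the paper's setting.
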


\begin{proof}
As $\pi_{k,n}$ corresponds to the uniform positroid $U_{k,n}$, all of its circuits are all the $k+1$-subsets of $[n]$. Let $O$ be any such circuit and let $$\mathcal C_O=\left\{C\in {\mleft(\genfrac..{0pt}{}{[n]}{k}\mright)} : C\in \mathcal{C}_A,  C\subset O \right\}.$$ If $|\mathcal C_O|\geq2$ then any two elements in $\mathcal C_O$ cover $O$. If $|\mathcal C_O|=1$, then $\mathcal C_O=\{C\}$ for some $C\in {\mleft(\genfrac..{0pt}{}{[n]}{k}\mright)}$ and there is one remaining element $x$ in $O$ we have yet to cover. Let us take the $k$-subset $D:=(C\setminus \{y\})\cup \{x\}$ for any $y\in C$. As $D\neq C$ and $|D|=k$, then $D\notin \mathcal C_O$. Thus there is a cyclic interval $L\in\mathcal C_A$ such that $x\in L$ and $L\subset D$. Thus, $O=L\cup C$ and the claim is proved in this case.

Finally, if $\mathcal C_O=\emptyset$ then every $k$-subset $C$ of $[n]$ contained in $O$ properly contains at least one of the cyclic intervals $[i_{s_j}+1,i_{s_j}+k-l_j]$ of $A$. Moreover, $\mathcal C_O$ contains at least 2 distinct intervals $L_1$ and $L_2$. To see this, take any $k$-subset $C_1=O\setminus \{x\}$ where $x\in O$ and let $L_1\subset C_1$ be an interval in $\mathcal C_A$. Now let $y\in L_1$ and set $C_2=O\setminus \{y\}$. Since $C_2\in\mathcal C_O$, there is an  interval $L_2\in\mathcal C_A$ such that $L_2\subseteq C_2$. Since $y\in L_1\subseteq C_1$ and $x\notin C_1$, then $L_1\neq L_2$. 
 
 Now assume $L_1,\dots,L_m$ are the intervals in $\mathcal C_A$ contained in $O$. Denote $L=L_1\cup \cdots\cup L_m$. Clearly $L\subseteq O$. We will prove the reverse containment. First notice that all the intervals in $L$ are pairwise disjoint. If this were not the case and if $|L|<|O|=k+1$ then without lose of generality we can assume $L_1\cap L_2\neq \emptyset$. As $L_1, L_2\in\mathcal C_A$ then there exist two disjoint cyclic intervals $J_1,J_2\in A$ that give rise to $L_1,L_2$. That is, for $r=1,2$, $J_r\cup L_r$ is a cyclic interval of length $k$. Assume that the least element in $L_1$ is less that the one in $L_2$. Then  $J_2\cup I_2 \subset L_1\cup L_2$ (otherwise $J_1$ overlaps $J_2$) which implies that $k<|L_1\cup L_2|\leq k+1$ and thus $O=L_1\cup L_2$. Therefore $O$ can not have overlapping intervals unless these intervals cover $O$.
 
 Now, suppose the intervals in $L$ are pairwise disjoint. Denote by $J_i\subseteq A$ the frozen interval that gives rise to $L_i$. We know that $\sum_{i=1}^m|J_i|\leq k-1$ and $|J_i|+|L_i|=k$ for all $i\in[m]$. Thus we get that $\sum_{i=1}^m|J_i|+\sum_{i=1}^m|L_i|=mk$ and $$\sum_{i=1}^m|L_i|=mk-\sum_{i=1}^m|J_i|\geq mk-(k-1)=(m-1)k+1\geq k+1$$ as $m\geq 2$. But since $L\subseteq O$ and $|O|=k+1$, we get that $L=O$. This finishes the proof.
 
\end{proof}

Theorem \ref{main:theo} follows immediately as a consequence of Theorem \ref{ShiftedUniformCircuits} and Proposition \ref{lemma:union_circuits}.

\begin{ex}[Illustrating Theorem \ref{main:theo}.]
Let $n=9$, $k=5$ and $A=[9,1]\cup(6)$. Then $\sigma=768291345$, the circuits $[\sigma(j_r),j_r]$ and the components $I_r$ of the Grassmann necklace $I_{\sigma}$ are given in the following table.

\begin{table}[h]
\centering
\begin{tabular}{|c|c|c|c|c|c|}
\hline
 $r$& 1 &2  &3  &4&5  \\
 \hline
$[\sigma(j_r),j_r]$ & [2,4] & [2,4] & [3,7] &[4,8] &[5,9] \\
\hline
$I_r$ & $[1,3]\cup\{5\}$ & $[2,3]\cup[5,6]$ & [3,6] &[4,7] &[5,8] \\
 \hline
 \hline
 $r$& 6 &7  &8  &9&  \\
 \hline
$[\sigma(j_r),j_r]$ & [7,1] & [7,1] & [8,3] &[2,4] & \\
\hline
$I_r$ & [6,9] & $[7,9]\cup\{2\}$ & [8,2] &[9,3] &\\
 \hline
\end{tabular}
\end{table}

\end{ex}






\begin{rem}
The converse of Theorem \ref{main:theo} holds up to $n=5$. That is, $\sigma\lessdot\pi_{k,5}$ if and only if $\sigma=\overleftarrow{\rho_A}(\pi_{k,n})$ for some $A\in {\mleft(\genfrac..{0pt}{}{[n]}{\ell}\mright)} $ where $\ell=0,\dots,k-1$. In Table \ref{tab:casos faltantes} we show for various values of $k$ and $n$ the number of positroids of rank $k-1$ on $[n]$ that are a quotient of $\pi_{k,n}$ but not characterized via Theorem \ref{main:theo}. In fact, the only instance in which it fails for $n=6$ is when $k=3$. Namely, the positroid $\pi=\pi_{3,6}=456123$ covers 24 positroids in the poset $P_6$ and there are $22=1+6+15$ positroids of the form $\overleftarrow{\rho_A}(\pi)$, where $A\in{\mleft(\genfrac..{0pt}{}{[6]}{\ell}\mright)}$ and $\ell=0,1,2$. The two extra positroids not obtained from this characterization correspond to the permutations $\sigma=652143$ and $\tau=416325$. Notice that $\sigma=\overleftarrow{\rho_{135}}(\pi)$ and $\tau=\overleftarrow{\rho_{246}}(\pi)$.
\end{rem}

\begin{table}[h]
\centering
\begin{tabular}{@{}lllll@{}}
\toprule
n                                         & k                      & $\#$ of quotients       & Characterized by Theorem \ref{main:theo}  & Missing     \\ \midrule
\multicolumn{1}{|l|}{6}                   & \multicolumn{1}{l|}{3} & \multicolumn{1}{l|}{24}  & \multicolumn{1}{l|}{22} & \multicolumn{1}{l|}{2}  \\ \midrule
\multicolumn{1}{|l|}{7}  & \multicolumn{1}{l|}{3} & \multicolumn{1}{l|}{36}  & \multicolumn{1}{l|}{29} & \multicolumn{1}{l|}{7}  \\ \cmidrule(l){2-5} 
\multicolumn{1}{|l|}{}                    & \multicolumn{1}{l|}{4} & \multicolumn{1}{l|}{71}  & \multicolumn{1}{l|}{64} & \multicolumn{1}{l|}{7} \\ \midrule
\multicolumn{1}{|l|}{8}  & \multicolumn{1}{l|}{3} & \multicolumn{1}{l|}{55}  & \multicolumn{1}{l|}{37} & \multicolumn{1}{l|}{18} \\ \cmidrule(l){2-5} 
\multicolumn{1}{|l|}{}                    & \multicolumn{1}{l|}{4} & \multicolumn{1}{l|}{119} & \multicolumn{1}{l|}{93} & \multicolumn{1}{l|}{26} \\ \cmidrule(l){2-5} 
\multicolumn{1}{|l|}{}                    & \multicolumn{1}{l|}{5} & \multicolumn{1}{l|}{179} & \multicolumn{1}{l|}{163} & \multicolumn{1}{l|}{16}\\ \midrule
\multicolumn{1}{|l|}{9}  & \multicolumn{1}{l|}{3} & \multicolumn{1}{l|}{85}  & \multicolumn{1}{l|}{46}  & \multicolumn{1}{l|}{39}\\ \cmidrule(l){2-5} 
\multicolumn{1}{|l|}{}                    & \multicolumn{1}{l|}{4} & \multicolumn{1}{l|}{202} & \multicolumn{1}{l|}{130} & \multicolumn{1}{l|}{72}\\ \cmidrule(l){2-5} 
\multicolumn{1}{|l|}{}                    & \multicolumn{1}{l|}{5} & \multicolumn{1}{l|}{322} & \multicolumn{1}{l|}{256} & \multicolumn{1}{l|}{66}\\ \cmidrule(l){2-5} 
\multicolumn{1}{|l|}{}                    & \multicolumn{1}{l|}{6} & \multicolumn{1}{l|}{412} & \multicolumn{1}{l|}{382} & \multicolumn{1}{l|}{30}\\ \midrule
\multicolumn{1}{|l|}{10} & \multicolumn{1}{l|}{3} & \multicolumn{1}{l|}{133} & \multicolumn{1}{l|}{56} & \multicolumn{1}{l|}{77}\\
\cmidrule(l){2-5} 
\multicolumn{1}{|l|}{}                    & \multicolumn{1}{l|}{4} & \multicolumn{1}{l|}{343} & \multicolumn{1}{l|}{176} & \multicolumn{1}{l|}{167}\\
\cmidrule(l){2-5} 
\multicolumn{1}{|l|}{}                    & \multicolumn{1}{l|}{5} & \multicolumn{1}{l|}{583} & \multicolumn{1}{l|}{386} & \multicolumn{1}{l|}{197}\\
\cmidrule(l){2-5} 
\multicolumn{1}{|l|}{}                    & \multicolumn{1}{l|}{6} & \multicolumn{1}{l|}{773} & \multicolumn{1}{l|}{638} & \multicolumn{1}{l|}{135}\\ \cmidrule(l){2-5} 
\multicolumn{1}{|l|}{}                    & \multicolumn{1}{l|}{7} & \multicolumn{1}{l|}{898} & \multicolumn{1}{l|}{848} & \multicolumn{1}{l|}{50}\\ \bottomrule
\end{tabular}
\caption{Number of quotient positroids of $U_{n,k}$.}
    \label{tab:casos faltantes}
\end{table}

 We highlight that although there are quotients of $U_{k,n}$ that do not satisfy all the conditions in \ref{main:theo}, these quotients also seem to have the same circuit structure that we have presented through cyclic shifts. The following examples illustrate this in the case of uncharacterized uniform positroid quotients and general positroid quotients.

\begin{ex}\label{missingUniformQuotients}
Consider the uniform positroid $U_{4,9}$ indexed by its decorated permutation $\pi_{4,9}=678912345$ and let $A=\{1,3,5,6,8\}=\{1\}\cup\{3\}\cup[5,6]\cup\{8\}$. So in this case $k=4$ and $|A|=5$. We obtain that $\sigma:=\rho_A(\pi)=698214375$. On the other hand, calculating the circuits of $P_\sigma$ we get that $$
\mathcal C_A=\{2,3,4\}\cup\{4,5,6\}\cup\{7,8\}\cup\{1,2,9\}\cup\left\{C\in {\mleft(\genfrac..{0pt}{}{[9]}{4}\mright)} \,:\,[2,4],[4,6],[7,8],[9,1]\not\subset C\right\}.
$$ Making use of SageMath we conclude that $\sigma\lessdot\pi_{4,9}$.
\end{ex}

\begin{ex}\label{missingQuotients}
Consider the positroid $P_\sigma$ indexed by the decorated permutation $\sigma=698214375$ and let $A=\{2,4,7,8,9\}=[2]\cup[4]\cup[7,8,9]$. So in this case $k=3$ and $|A|=5$. We get  that $\tau:=\rho_A(\sigma)=198234576$ and the circuits of $P_\tau$ are $$
\mathcal C_A=\{1\}\cup\{7,8\}\cup\{2,9\}\cup\left\{C\in {\mleft(\genfrac..{0pt}{}{[9]}{3}\mright)} \,:\,[1],[7,8],\{2,9\}\not\subset C\right\}.
$$ Again, using SageMath we see that $\tau\lessdot\sigma$.
\end{ex}


 \begin{coro}\label{dual}
 Let $\tau\in D_{n-k+1,n}$ be a decorated permutation such that $\tau=\overrightarrow{\rho_B}(\pi_{n-k,n})$ for some $B\in {\mleft(\genfrac..{0pt}{}{[n]}{\ell}\mright)} $ where $\ell=0,\dots,k-1$. Then $\pi_{n-k,n}\lessdot\tau$.
\end{coro}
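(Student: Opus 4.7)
The plan is to obtain this dual statement from Theorem \ref{main:theo} by combining Proposition \ref{prop:strongmaps}(b) (quotients are reversed by matroid duality) with Proposition \ref{prop:inverse and dual} (which converts a rightward shift of $\pi_{n-k,n}$ into a leftward shift of $\pi_{k,n}$). Under this plan, the corollary requires essentially no new combinatorial work: everything reduces to the Main Theorem via a single dualization.

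Concretely, I would set $B' := \pi_{n-k,n}^{-1}(B)$, so that $|B'| = \ell \leq k-1$. Proposition \ref{prop:inverse and dual}, applied with $n-k$ in place of $k$ and $B$ in place of $A$, then gives
$$\tau^{-1} = \overrightarrow{\rho_B}(\pi_{n-k,n})^{-1} = \overleftarrow{\rho_{B'}}(\pi_{k,n}).$$
Because inversion of a decorated permutation (with the overline/underline swap on fixed points induced by the convention of the Proposition) corresponds to dualizing the underlying positroid, one has $P_{\tau^{-1}} = P_\tau^*$ and, analogously, $P_{\pi_{k,n}} = U_{k,n} = U_{n-k,n}^* = P_{\pi_{n-k,n}}^*$. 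The Main Theorem applied to $\overleftarrow{\rho_{B'}}(\pi_{k,n})$, whose index set has size at most $k-1$, now yields $\tau^{-1} \lessdot \pi_{k,n}$ in $\mathcal{P}_n$; equivalently, $P_\tau^*$ is a rank $k-1$ quotient of $P_{\pi_{n-k,n}}^*$. Applying Proposition \ref{prop:strongmaps}(b) with $M = P_{\pi_{n-k,n}}$ and $N = P_\tau$ translates this into $P_{\pi_{n-k,n}}$ being a quotient of $P_\tau$; dualizing ranks gives $r_{P_\tau} = n - (k-1) = n-k+1$, which exceeds $r_{P_{\pi_{n-k,n}}} = n-k$ by exactly one. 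Hence $\pi_{n-k,n} \lessdot \tau$.

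The only delicate point I anticipate is verifying that the inversion convention in Proposition \ref{prop:inverse and dual} matches the decoration convention for the dual positroid (loops becoming coloops, and vice versa), so that $P_{\tau^{-1}}$ really is $P_\tau^*$ on the nose. This is a routine consistency check already implicit in the cited proposition; once it is in place, the corollary follows from the Main Theorem with no further combinatorial input.
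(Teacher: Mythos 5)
Your proposal is correct and follows essentially the same route as the paper's proof: dualize the quotient relation via Proposition \ref{prop:strongmaps}(b), use $P_\tau^*=P_{\tau^{-1}}$, convert the rightward shift into a leftward shift of $\pi_{k,n}$ via Proposition \ref{prop:inverse and dual}, and then invoke Theorem \ref{main:theo} together with the rank count. If anything, your bookkeeping of $k$ versus $n-k$ and of the frozen set $B'=\pi_{n-k,n}^{-1}(B)$ is stated more carefully than in the paper's own write-up.
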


\begin{proof}
Consider the decorated permutation $\pi_{n-k,n}$ corresponding to the uniform matroid $U_{n-k}$. Let $B\in {\mleft(\genfrac..{0pt}{}{[n]}{\ell}\mright)} $ where $\ell=0,\dots,k-1$ and $\tau=\overrightarrow{\rho_B}(\pi_{k,n})$. Now, taking their corresponding positroids, we have that $U_{n-k,n}$ is a quotient of $P_{\tau}$ if and only if $P_{\tau}^*$ is a quotient of $U_{n-k,n}^*$. Since $U_{n-k,n}^*=U_{k,n}$ and $P_{\tau}^*=P_{{\tau}^{-1}}$ (see \cite{ohHal}), we have that $U_{n-k,n}$ is a quotient of $P_{\tau}$ if and only if $P_{{\tau}^{-1}}$ is a quotient of $U_{k,n}$. Now this occurs if and only if ${\tau}^{-1}\leq \pi_{k,n}$. Since $\tau=\overrightarrow{\rho_B}(\pi_{k,n})$, we know that ${\tau}^{-1}=\overleftarrow{\rho_A}(\pi_{n-k,n})$ where $A={\tau}^{-1}(B)$ because of Proposition \ref{prop:inverse and dual}. But notice that $A\in {\mleft(\genfrac..{0pt}{}{[n]}{\ell}\mright)}$ where $\ell=0,\dots,k-1$. Therefore using Theorem \ref{main:theo} we get that ${\tau}^{-1}\leq \pi_{k,n}$. Following back our trail of if and only ifs this means that $U_{n-k,n}$ is a quotient of $P_{\tau}$ and $\pi_{n-k,n}\lessdot\tau$ as we wished.

\end{proof}

In view of Corollary \ref{dual}, we see that $\pi_{n,n}$ covers $r$ positroids if and only if $\pi_{0,n}$ is covered by $r$ positroids. However, since $\pi_{0,n}$ and $\pi_{n,n}$ are the bottom and top elements of $P_n$, respectively, then $r=|D_{1,n}|=|D_{n-1,n}|=2^n-1$ and $|D_{1,n}|=\sum_{\ell=0}^{n-1}{\mleft(\genfrac..{0pt}{}{n}{\ell}\mright)}$. This allows us to conclude that the converse of Theorem \ref{main:theo} also holds for $\pi_{n,n}$. 

As we would like to understand concordance from different points of view, we pose the following conjecture which has been checked for flags of positroids obtained from a single matrix representation.

\begin{conj}
Let $\tau_1, \tau_2\in P_n$ be such that $\tau_1\lessdot\tau_2$. Let $ I_{i}=(I_1^i,\cdots,I_n^i)$ be the Grassmann necklace of $\tau_i$, for $i=1,2$, respectively. Then $I_j^1\subseteq I_j^2$ for each $j\in[n]$.
\end{conj}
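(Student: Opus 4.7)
The plan is to establish the conjecture via the rank-function characterization of the Grassmann necklace, combined with the monotonicity of the rank difference provided by Proposition~\ref{prop:strongmaps}(c). The key observation is the following greedy description of $I_j$: writing $L_j^{\leq m}=\{e\in[n]:e\leq_j m\}$ and $L_j^{<m}=\{e\in[n]:e<_j m\}$, for any matroid $P$ on $[n]$ one has $m\in I_j(P)$ if and only if $r_P(L_j^{\leq m})=r_P(L_j^{<m})+1$. This is a standard consequence of the fact that $I_j$ is the $j$-Gale-minimum basis of $P$ (Proposition~\ref{prop:PostoGN}), built greedily in $j$-order by adding each element $m$ exactly when doing so increases the rank of the running initial segment.

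Now let $M=P_{\tau_1}$, $N=P_{\tau_2}$, and set $\delta(A):=r_N(A)-r_M(A)$. Because $M$ is a quotient of $N$, part~(c) of Proposition~\ref{prop:strongmaps} rearranges to the statement that $\delta$ is a monotone set function on $2^{[n]}$: $A\subseteq B$ implies $\delta(A)\leq\delta(B)$. Fix $j\in[n]$ and suppose $m\in I_j^1$, so $r_M(L_j^{\leq m})=r_M(L_j^{<m})+1$. Then
\begin{equation*}
r_N(L_j^{\leq m})-r_N(L_j^{<m})=\bigl(r_M(L_j^{\leq m})-r_M(L_j^{<m})\bigr)+\bigl(\delta(L_j^{\leq m})-\delta(L_j^{<m})\bigr)\geq 1
\end{equation*}
by monotonicity of $\delta$. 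On the other hand, $L_j^{\leq m}$ and $L_j^{<m}$ differ by the single element $m$, so every rank function satisfies $r_N(L_j^{\leq m})-r_N(L_j^{<m})\leq 1$. Combining these inequalities gives equality, hence $m\in I_j^2$. This proves $I_j^1\subseteq I_j^2$.

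The only step requiring real care is verifying the rank-theoretic characterization of $I_j$; the remainder is essentially a one-line calculation. I expect the main obstacle to be simply locating or writing out a clean reference for this greedy description, as it is commonly treated as matroid folklore rather than stated explicitly in the positroid literature. Notably, the argument never invokes the hypothesis that $\tau_1\lessdot\tau_2$ is a covering relation, nor does it use that the matroids are positroids: the same proof yields $I_j^M\subseteq I_j^N$ for any pair of matroids on $[n]$ such that $M$ is a quotient of $N$. This strengthening may be worth recording as a separate lemma, as it suggests the converse-in-Grassmann-necklaces question (which pairs of nested necklaces arise from concordant positroids) is the substantive open problem.
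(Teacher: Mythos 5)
The statement you are addressing is presented in the paper only as a conjecture: the authors offer computational evidence (flags of positroids coming from a single matrix representation) but no proof, so there is no argument of theirs to compare against. Your argument appears to be complete and correct, and it settles the conjecture affirmatively. The one ingredient you flag as folklore does hold and is easy to record: if $B_j$ is the set produced by the greedy algorithm run in the order $<_j$, then $|B_j\cap L_j^{\leq m}|=r_P(L_j^{\leq m})$ for every $m$, whereas any basis $B$ satisfies $|B\cap L_j^{\leq m}|\leq r_P(L_j^{\leq m})$; comparing these counts over all $m$ shows $B_j\leq_j B$ in the $j$-Gale order, so $B_j$ is the necklace entry $I_j$ of Proposition~\ref{prop:PostoGN}, and membership in $B_j$ is exactly the unit rank jump $r_P(L_j^{\leq m})=r_P(L_j^{<m})+1$ that you use. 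With that in place, the monotonicity of $\delta=r_N-r_M$ is a literal rearrangement of Proposition~\ref{prop:strongmaps}(c), and the fact that adding a single element raises rank by at most one closes the argument. As you note, the proof uses neither positivity, nor the covering hypothesis, nor that the ranks differ by one: it shows that for arbitrary matroids $M,N$ on $[n]$ with $M$ a quotient of $N$, the $j$-Gale-minimal bases satisfy $I_j(M)\subseteq I_j(N)$ for all $j$, which strictly contains the conjectured statement and would upgrade it from conjecture to proposition. Your closing observation is also consistent with the paper's remark following the conjecture that the converse fails (componentwise containment of necklaces, e.g.\ $(1,3,3,1)$ inside $(12,23,34,41)$, does not force a quotient), so the substantive open direction is indeed the converse, not the containment itself.
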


\begin{rem}
Containment of Grassmann necklaces does not guarantee quotient property. For example, the necklace $I=(1,3,3,1)$ is contained in $J=(12,23,34,41)$, componentwise. However, the positroid corresponding to $J$ has $\{2,3,4\}$ as a circuit which can not be written as a union of elements in  $\mathcal C_{I}=\{\{2\}, \{4\},\{1,3\}\}$.
\end{rem}

We end by providing a conjecture that summarizes the findings detailed in this paper. This conjecture is based on evidence generated using SageMath.

\begin{conj}
Let $\sigma$ and $\pi$ be positroids on $[n]$ of ranks $k-1$ and $k$, respectively. Then $\sigma\lessdot\pi$ if and only if $\sigma=\overleftarrow{\rho_A}(\pi)$ for some $A\subset[k]$.
\end{conj}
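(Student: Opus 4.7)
The conjecture extends Theorem \ref{main:theo} from uniform positroids to arbitrary positroids and upgrades it to a biconditional. The plan is to separate the two directions: \emph{sufficiency} ($\sigma=\overleftarrow{\rho_A}(\pi)$ implies $\sigma\lessdot\pi$) and \emph{necessity} ($\sigma\lessdot\pi$ implies $\sigma=\overleftarrow{\rho_A}(\pi)$). I would treat sufficiency first, since it parallels existing machinery, and then attack necessity.

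For sufficiency, I would generalize Theorem \ref{ShiftedUniformCircuits} to give an explicit combinatorial description of $\mathcal{C}(P_{\overleftarrow{\rho_A}(\pi)})$ for arbitrary $\pi$. The description I expect is: every circuit of $\pi$ of size at most $k-1$ persists (possibly relabeled by the shift), together with ``short'' circuits $[i_j+1,\sigma(i_j+1)]$ attached to each maximal cyclic component of $A$, together with those $k$-subsets that are dependent in $\pi$ but avoid all the new short intervals. To prove this, I would track how $\overleftarrow{\rho_A}$ changes the Grassmann necklace: mimicking the proof of Theorem \ref{ShiftedUniformCircuits}, one should be able to identify for each $r$ the unique excedance $j_r$ that must be dropped from $I_r(\pi)$ to form $I_r(\sigma)$, and then verify minimality of the candidate circuits via the $r$-Gale order on bases. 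Once the description of $\mathcal C(P_\sigma)$ is secured, the proof of Proposition \ref{lemma:union_circuits} carries over essentially verbatim: any circuit $O$ of $\pi$ either contains a short circuit of $\sigma$ (then $O$ splits as the short interval together with a $k$-subset of $O$ avoiding it) or contains enough $k$-subsets that are themselves circuits of $\sigma$ to cover $O$.

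For necessity, the strategy is to reconstruct $A$ canonically from the pair $(\pi,\sigma)$. The natural candidate is
\begin{equation*}
A\ =\ \{\,\pi(i)\,:\,i\in[n],\ \sigma(i)=\pi(i)\,\},
\end{equation*}
i.e., the values that $\sigma$ leaves in the same position as $\pi$. One would verify that (i) the cyclic components of $A$ satisfy the length bound of Theorem \ref{thm:rank} (necessary for $\rho_A(\pi)$ to have rank $k-1$), and (ii) applying $\overleftarrow{\rho_A}$ to $\pi$ reproduces $\sigma$ on all non-frozen positions. Step (ii) should follow from the Grassmann-necklace containment $I_j(\sigma)\subseteq I_j(\pi)$ of the preceding conjecture, which forces the unique cyclic shift of the non-frozen values once $A$ is fixed. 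An induction on $|A|$, reducing any covering relation to a sequence of single-element freezes, should then close the argument.

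The main obstacle is necessity. Even in the uniform case, the converse of Theorem \ref{main:theo} fails under the more restrictive hypothesis on $|A|$ used there, and the ``missing'' quotients of Table \ref{tab:casos faltantes} (such as $652143\lessdot 456123$) are recovered only by allowing larger $A$ with cyclic components still respecting the rank bound of Theorem \ref{thm:rank}. Thus any proof of the biconditional must calibrate carefully which sets $A$ are admissible, and must exclude the existence of a quotient $\sigma\lessdot\pi$ not reachable from $\pi$ by any such freeze-and-shift. Establishing the latter appears to require a global structural understanding of the poset $\mathcal P_n$---in particular, a version of the Higgs-lift argument of \cite[Prop. 8.2.5]{white} internal to the positroid world---or an explicit inverse procedure that, given a circuit-union certificate of the quotient relation, reads off the frozen values. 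Pinning down that inverse is, I believe, where the bulk of the difficulty lies.
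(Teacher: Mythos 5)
This statement is one of the paper's closing conjectures: the paper offers no proof of it, only computational evidence from SageMath, so there is no argument of the authors' to compare yours against, and what you have written is a research plan rather than a proof. Judged on its own terms it has genuine gaps in both directions. For sufficiency, you assert that a circuit description analogous to Theorem \ref{ShiftedUniformCircuits} holds for $\overleftarrow{\rho_A}(\pi)$ with $\pi$ an arbitrary positroid and that Proposition \ref{lemma:union_circuits} then ``carries over essentially verbatim,'' but the proofs of both results lean hard on the special shape of $\pi_{k,n}$: there every Grassmann-necklace component is the cyclic interval $[r,r+k-1]$ minus one element, every circuit of $U_{k,n}$ is a $(k+1)$-subset, and the counting step $\sum_i|L_i|\geq k+1$ uses $|J_i|+|L_i|=k$ exactly. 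None of this is available for a general $\pi$, whose necklace components and circuits have no such interval structure, so the ``persistence'' of old circuits and the minimality of the new short intervals would each need a new argument, not a transplanted one (the paper itself only offers Examples \ref{missingUniformQuotients} and \ref{missingQuotients} as evidence that the circuit pattern seems to survive).

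For necessity the situation is worse. Your reconstruction $A=\{\pi(i):\sigma(i)=\pi(i)\}$ is only a guess, and the step you invoke to show $\overleftarrow{\rho_A}(\pi)=\sigma$ on the unfrozen positions rests on the Grassmann-necklace containment $I_j(\sigma)\subseteq I_j(\pi)$, which is the paper's \emph{other} unproven conjecture, so the argument is conditional on an open statement; moreover necklace containment does not by itself determine the unfrozen values, and the paper's remark that containment does not even imply the quotient property should make you cautious about how much force it carries. You also never engage with the actual hypothesis of the conjecture, namely $A\subset[k]$: your candidate $A$ has no reason to lie in $[k]$, and the ``missing'' uniform quotients you cite (e.g.\ $652143=\overleftarrow{\rho_{135}}(\pi_{3,6})$, with frozen set $\{1,3,5\}\not\subset[3]$) sit awkwardly against that restriction, so before anything else the admissible class of freezing sets has to be pinned down precisely. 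Finally, you candidly identify that excluding quotients not reachable by any freeze-and-shift requires either a positroid-internal Higgs-lift argument or an explicit inverse procedure, and you supply neither; that exclusion is the heart of the biconditional, so as it stands the proposal does not prove the conjecture in either direction.
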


\section{Future work}\label{future}

The main endeavor for the authors' future work is the complete characterization of chains of positroids that result in a flag positroid. As pointed out in the introduction, this may open the door to the notion of realizability for positively oriented flag matroids, in the vein of the work of \cite{ARWOriented}.

A parallel approach to the concordance problem can be taken via positroid polytopes. That is, study chains of positroids that form a flag and inequalities of the corresponding flag polytope. That is, can we characterize flag positroids via flag matroid polytopes? 

Finally, the path took here toward the concordance problem has unveiled the poset of concordance which deserves to be explored further. Some interesting questions in this direction are:
\begin{itemize}
    \item[(a)] \emph{What is the Mobius function of the poset $P_n$?} Up to $n=4$ the first values of $\mu(P_n)$ are $1,-1,2,-9,92$.
    \item[(b)] \emph{Is there an ER-labelling of $P_n$?} A candidate for such labelling is labelling the edge of the covering $\tau\lessdot\sigma$ by the set frozen when passing from $\sigma$ to $\tau$. Answering this question may give a Whitney dual for this poset, in the sense of~\cite{GonzalezHallma}.
\end{itemize}

\section{Acknowledgements}
C. Benedetti is grateful with the Faculty of Science of Universidad de los Andes and the grant FAPA. A. Chavez thanks the National Science Foundation under Award No. 1802986.


\bibliographystyle{alpha}
\bibliography{Quotients_of_uniform_positroids}

\end{document}